\newcommand{\bohrc}[1]{\mathbb{B}}
\newcommand{\logz}[1]{\frac{\zeta'}{\zeta}}
\newtheorem{definition}{Definition}
\newtheorem{remark}{Remark}
\newtheorem{theorem}{Theorem}
\newtheorem{lemma}{Lemma}
\newtheorem{corollary}{Corollary}
\newtheorem{proposition}{Proposition}
\newtheorem{example}{Example}
\title{A probabilistic interpretation of Weil's explicit sums and arithmetic spectral measures. }
\author{\'Angel Alfredo Mor\'an Ledezma}
\date{\today}
\begin{document}

\maketitle

\begin{abstract}
    In this paper we study the connections of three paradigms in number theory: the adelic formulation of the Riemann zeta function, the Weil explicit formula and the concepts of the so called probabilistic number theory initiated by Harald Bohr. We give a different reformulation, rooted in the adelic framework, of the theory of distribution values of the Riemann zeta function. By introducing the Bohr compactification of the real numbers as a natural probability space for this theory, we show that the Weil explicit sum can be expressed in terms of covariances and expected values attached to random variables defined on this space. Moreover, we express the explicit formula as a limit of spectral integrals attached to operators defined on the Hilbert space of square-integrable functions on the Bohr compactification. This gives a probabilistic and a geometrical interpretation of the Weil explicit formula. 
\end{abstract}

\section{Introduction.}
Many different realms of mathematics are involved in the description of the behavior of the Riemann zeta function. One of them is probability theory. This branch of mathematics, the so called probabilistic number theory has different definitions for different authors, here we will understand it as the study of the distribution values of arithmetic objects such as Dirichlet series. This theory has been employed successfully to study the analytic properties of a large class of $L$-functions, for example, the so called universality property of zeta functions. In particular the following problem is usually investigated: given some set, how often do values of an arithmetic object get into this set? This frequency follow strict mathematical laws, and as stated by L. Laurinčikas  in \cite{LimitLaurincikas}, this give an analogy with quantum mechanics where it is impossible to describe the chaotic behavior of one particle (in this case concrete values of the zeta function), but a large number of particles obey statistical laws. H. Bohr devoted his scientific carrier in the study of such distributions. Together with his student B. Jessen were able to describe in probabilistic terms the complicated behavior of the function $t\mapsto \log \zeta(\sigma+it)$ for $t\in \mathbb{R}$ and and $\sigma>\frac{1}{2}$, where $\sigma+it $ lies in the zero-free region of $\zeta$ in the strip $\frac{1}{2}<\sigma\leq 1$ (see Theorem 1 and Theorem 2), this result is usually called the Bohr-Jessen limit theorem and describes the limit behavior as $T\rightarrow \infty$, of the probability measures given by $\frac{1}{2T}\mu_0(\{t\in [-T,T]: \zeta(\sigma+it)\in A)$, where $A\subset{\mathbb{C}}$ is open and $\mu_0$ denotes the usual Lebesgue measure on the real numbers. In other words, one is interested in the asymptotic behavior of the complex random variable $t\mapsto \zeta(\sigma+it)$, defined on the probability space $([-T,T],\mu_0/2T)$ \cite{Bohrjessenlimit}.  In this work we investigate how this theory is connected with other two paradigms in number theory: the Iwasawa-Tate theory, which allow us to represent the zeta function as an adelic integral over certain idele group, and the well know Weil explicit sums, which encapsulate the relationship between the distribution of the prime numbers on the real line and the zeroes of the zeta function. This last two subjects are already connected. Works that shows this connection include the results of S. Haran, for example in \cite{Haran1990} and by the well known results of A. Connes \cite{Connes1997}. Both of them achieve to obtain different geometric expressions for the Weil explicit sums as an additive convolution with the so called Riesz potential and as Trace formula, respectively. This results can be considered as part of the framework of the Polya-Hilbert project on the study of the Riemann Hypothesis. This work aims to contribute in two different ways: one is exhibit explicitly the connections with the Bohr-Jessen probability theory and the adelic theory of the Riemann zeta function by presenting a new approach of the formulation of how we can understand the limit distribution generated by the random variable $t\mapsto \zeta(\sigma+it)$ in both context (adelic and probabilistic), this is explained in the first chapter and is motivated by an observation of S. Haran given in \cite{Mystery}. The second one is the interpretation of the Weil explicit sums in this probabilistic framework. By using the concept of "off-diagonal" zeta function, we present the Bohr compactification of the reals, denoted by $\bohrc{}$ (and isomorphic to the non-necesarilly continuous homomorphisms $Hom[\mathbb{R},S^1]$ from the additive group of the real numbers to the complex unit circle)  as the natural framework to develop the known theory of probabilistic number theory. Remember that for $Re(s)>1$ the complete Riemann zeta function can be understood as an adelic integral taking values on $\mathbb{C}=Hom_{cont}(\mathbb{A}^{*}/\mathbb{Z}^*_{\mathbb{A}}\mathbb{Q}^*,\mathbb{C}^*)$. This space is embedded diagonally on $Hom_{cont}(\mathbb{A}^*/\mathbb{Z}_{\mathbb{A}}^*,\mathbb{C}^*)$. The "off-diagonal" functions that appeared in this work can be understood as extensions of finite-adelic integrals taking values on a compact subdomain of $Hom_{cont}(\mathbb{A}^*/\mathbb{Z}_{\mathbb{A}}^*,\mathbb{C}^*)$. It turns out that this functions can be considered as random variables on the Bohr compactification of the reals, and as a first result we show in Theorem 4 the connection of the "off-diagonal" Riemann zeta function and the Bohr-Jessen limit theorem. Therefore we expect that this observation gives a natural reformulation rooted on the Adelic framework of the works of Bohr, Jessen, Matsumoto, Laurincickas, Takanobu and others (see for example \cite{takanobu,LimitLaurincikas,Bohrjessenlimit,AntanasLaurinčikas2001}). The second part of this paper is devoted to the study of the Weil explicit sums. It was shown by A. Weil \cite{Weilexplicit} that the Riemann Hypothesis is equivalent to the negativity of the right-hand side of the Weil explicit formula:  
\begin{equation*}
    \begin{split}
        M(f*\bar{f}^*)(0)+&M(f*\bar{f}^*)(1)-\sum_{\zeta_{\mathbb{Q}}(s)=0} M(f*\bar{f}^*)(s)\\
        &=\sum_{p\leq \infty} \int_{1/2-i\infty}^{1/2+i\infty} M(f*\bar{f}^*)(s)d\log\frac{\zeta_p(1-s)}{\zeta_p(s)},
    \end{split}
\end{equation*}
where $\zeta_p(s)=(1-p^{-s})^{-1}$ for prime $p<\infty$, $\zeta_{\infty}(s)=\pi^{-s/2}\Gamma(\frac{s}{2})$ and $\zeta_{\mathbb{Q}}$ is the complete Riemann zeta function. Here, $f\in C_{c}^{\infty}(\mathbb{R}_{+}^*)$ and $M(f)(s)=\int_{0}^{\infty}f(x)x^{s} \frac{dx}{x}$ is the Mellin transform of $f$. The right hand side, denoted by $W(f*\bar{f}^*)$ shows the relationship between the zeroes of the zeta function and the prime numbers. In order to study this objetct in the probability framework first we show that the $\mathbb{Z}_p^*$-invariant functions of  $L^2(\mathbb{Q}_p^*)$  form a subspace of $L^2(\mathbb{B})$ where the $p$-adic Mellin transform gives the isometry. Moreover, using the structure of $C^*$-algebra of $C(\mathbb{B})$, the space of continuous functions on $\mathbb{B}$, we show that $L^1(\mathbb{Q}_p^*)^{\mathbb{Z}_p^*}$ can be embedded in $C(\mathbb{B})$ using the Mellin transform preserving the compatibility of  involutions of $*-$algebras. This allow us to construct many different random variables, in particular, for any function on $f\in C_{c}^{\infty}(\mathbb{R}_+^*)$ its restriction to the lattice generated by $\{p^k:p \ is \ prime, \ k\in \mathbb{Z}\}$ generate a well defined function on $L^1(\mathbb{Q}_p^*)^{\mathbb{Z}_p^*}\cap L^2(\mathbb{Q}_p^*)^{\mathbb{Z}_p^*}$, for very prime $p$, and therefore a random variable in $C(\mathbb{B})$. With this we are able to show that the local terms in the Weil explicit sum are in fact a covariance (also an expected value) of the random variable generated by $f$ and the random variable generated by $Re \ \frac{d}{ds}\log \zeta_p(s)$, the real part of the logarithmic derivative of the local factor of the Riemann zeta function (Theorem 5 and Theorem 6). Moreover, since $L^2(\mathbb{B})$ provides a faithful representation of the $C^*-$algebra $C(\bohrc{})$, this covariance (and therefore the finite part of the Weil explicit sum, $p<\infty$) can be interpreted as an spectral integral generated by the multiplication operator attached to the real part of the  "off-diagonal" logarithmic derivative of the Riemann zeta function $Re \frac{\zeta'}{\zeta}$ restricted on the critical line $Re(s)=\frac{1}{2}$. With this we get two types of descriptions, one of probabilistic nature and one of geometrical nature. Chapter 4 is devoted to the extension of this results now considering also the archimedean place. For this, inspired by the ideas of \cite{Mystery} and \cite{connespositive} we use the $q$-discretization of the gamma function to give an approximation of the complete Weil explicit sum. We show that the so called $q$-calculus appeared naturally in the spaces $L^2(\bohrc{B})$ and $C(\bohrc{})$. This allow us to achieve a full description of the Weil explicit sum in terms of the probabilistic framework of the probability space $(\mathbb{B},d\chi)$ where $d\chi$ denotes the Haar measure of $\mathbb{B}$, therefore the Riemann hypothesis becomes equivalent to an upper bound to certain covariance or the negativity of certain expected value (Corollary 1). Moreover, we show that $W(f*\bar{f}^*)$ can be understood as a limit of expected values and as a limit of spectral integrals of operators defined on $L^2(\bohrc{})$ (Theorem 8, Corollary 1, Theorem 9, Corollary 2). The analytical structure of $C(\bohrc{})$ as a $C^*$-algebra and its connection with almost periodic functions, is already very well known by the physics community since this space appears naturally in the description of the theory of Polymer Quantum Mechanics (PQM) \cite{bohrrelativ,Velhinho_2007,FLORESGONZALEZ2013394}. In chapter 5 we present some outlook regarding this connection with PQM, and proposing this theory as a possible alternative way to explore the ideas of the Polya-Hilbert project. 

\section{Preliminaries: The diagonal zeta function.}
In this section we present one important observation given by S. Haran in \cite{Mystery}, which allow us to introduce the main idea behind the results of this work. The goal is to understand the relationships between three paradigms in Number Theory, namely, the ideas of Tate-Iwasawa Theory, The Weil explicit sums and the Bohr-Jessen  probability theory of the zeta function. \newline

We are interested in the unramiefied part of Tate-Iwasawa theory, that is on the on the $\mathbb{Z}_p^*-$invariant theory (locally), where $\mathbb{Z}_p^*$ denotes the multiplicative group of the $p$-adic integers. Denote by $\mathbb{Q}_p^*$ the multiplicative group of $\mathbb{Q}_p$ with the attached  Haar measure $d^*\mu_p$ , also we have $\mathbb{Z}_p^{*}=\{x\in \mathbb{Z}_p: |x|_p=1\}$, where $|\cdot|_p$ is the canonical absolute value on $\mathbb{Q}_p$. We have that 
\[\mathbb{Q}_p^* / \mathbb{Z}_p^*=p^{{\mathbb{Z}}},\]
By taking the corresponding Pontryagin dual we obtain
\[\widehat{\mathbb{Q}_p^* / \mathbb{Z}_p^*}=i\mathbb{R}/\frac{2\pi i}{\log p}\mathbb{Z},\]
That is, any element of $Hom_{cont}(\mathbb{Q}_p^* / \mathbb{Z}_p^*,S^1)$ is of the form $|\cdot|_p^{it}$. These correspond to the unramified characters on $\mathbb{Q}_p^*$. More generally for unramified quasi-characters
\[Hom_{cont}(\mathbb{Q}_p^* / \mathbb{Z}_p^*,\mathbb{C}^*)=\mathbb{C}/\frac{2\pi i}{\log p} \mathbb{Z},\]
that is, an element $\chi$ in this space is of the form $\chi=|\cdot|_p^{\sigma+it}$.  

For $f(x)=f(|x|_p)\in C_c(\mathbb{Q}_p^*)^{\mathbb{Z}_p^*}$,  we define its \textit{local Mellin transform} at $\chi=|\cdot|^{\sigma+it}$ or equivalently at $s=\sigma+it$ as the multiplicative Fourier transform of $f|\cdot|_p^{\sigma}$, i.e.,
\[\mathcal{M}_p(f)(s):=\int_{\mathbb{Q}_p^{*}} f(|x|_p)|x|_p^{s}d^*\mu_p(x)=\sum_{k\in \mathbb{Z}}f(p^{-k})p^{-ks}.\]
    The local factor of the zeta function is then $\zeta_p(s)=\mathcal{M}_p(\phi_p)(s)=(1-p^{-s})^{-1}$, where $\phi_p(x)=1_{\mathbb{Z}_p}(x)$, the indicator function of $\mathbb{Z}_p$ . In his Thesis \cite{Tate1950FourierAI}, John Tate makes the following observation, \newline
\begin{addmargin}[1em]{1em}
\begin{center}
\emph{"We can do nothing really significant with the idele group until we imbed the multiplicative
group $K^*$ of K in it ”.\\
\hspace{\fill}— John Tate}
\end{center}
\end{addmargin}
As stated by J. Tate, in the Global Theory, we are not interested in all the characters of the Idèle group $\mathbb{A}^*$. This is resumed in the following description of the quasi-characters of the Idèle class group $\mathbb{A}^*/\mathbb{Q}^*$:

\[Hom_{cont}(\mathbb{A}^{*}/\mathbb{Q}^*,\mathbb{C}^*)\ni\chi=\eta(\cdot)|\cdot|_{\mathbb{A}}^{\sigma+it},\]
where $\eta$ is the ramified part of the character $\chi$, or in the unramified case:
\[\mathbb{C}=Hom_{cont}(\mathbb{A}^{*}/\mathbb{Z}^*_{\mathbb{A}}\mathbb{Q}^*,\mathbb{C}^*)\ni\chi=|\cdot|_{\mathbb{A}}^{\sigma+it},\]
where $\mathbb{Z}_{\mathbb{A}}^*$denotes the group of units in the ring of Adele integers. Therefore, for a sufficiently nice function $f$, its \textit{Global Mellin transform} is defined at the quasi-character $\chi=|\cdot|_{\mathbb{A}}^{s}$, or equivalently at $s=\sigma+it$ by the integral 
\[\mathcal{M}(f)(\chi):= \int_{\mathbb{A}^*} f(x) \chi(x) d^*\mu_{\mathbb{A}}(x), \]
In particular for the function $\phi=\otimes_{p\leq \infty}\phi_p$, where $\phi_{\infty}=e^{-\pi x^2}$, and for fixed $Re(s)=\sigma>1$, its global Mellin transform gives the complete Riemann zeta function
\[\zeta_{\mathbb{Q}}(s)=\mathcal{M}(\phi)(s)=\int_{\mathbb{A}^*}\phi(x)|x|_{\mathbb{A}}^s d^*\mu_{\mathbb{A}^*}(x),\]
With the convergent Euler product
\[\zeta_{\mathbb{Q}}(s)=\prod_{p\leq \infty} \zeta_p(s).\]
This leads us to understand the function  $\chi \mapsto \zeta_{\mathbb{Q}}(\chi)$ as a restriction on the diagonal generated by the following embedding: 
\[\mathbb{C}=Hom_{cont}(\mathbb{A}^{*}/\mathbb{Z}^*_{\mathbb{A}}\mathbb{Q}^*,\mathbb{C}^*)\hookrightarrow Hom_{cont}(\mathbb{A}^*/\mathbb{Z}_{\mathbb{A}}^*,\mathbb{C}^*)\]
where
\[Hom_{cont}(\mathbb{A}^*/\mathbb{Z}_{\mathbb{A}}^*,\mathbb{C}^*)=\mathbb{C}\times\prod_{p<\infty}\mathbb{C}/\frac{2\pi i}{\log p}\mathbb{Z},
\]
Therefore we can rewrite the Euler product as: 
\[\zeta_{\mathbb{Q}}(s)=\prod_{p\leq \infty} \zeta_p(s_p)|_{s_p=s}.\]
\subsection{The off-diagonal zeta function as a random variable.}
We now extend the above observation given by S. Haran, we present a way to understand the off diagonal zeta function. The Riemann zeta function is defined by the contribution of the finite primes:
\[s\mapsto\zeta(s):=\prod_{p<\infty}\zeta_p(s),\]
for $Re(s)>1$. As the case above, this can be seen as the restriction (of a more general function) to the diagonal $\mathbb{C}$. In order to give a formal description of this, consider the space 
\[Hom_{cont}(\mathbb{A}_f^*/\hat{\mathbb{Z}}^*,\mathbb{C}^*)=\prod_{p<\infty} \mathbb{C}/\frac{2\pi i}{\log p}\mathbb{Z}\]
where $\mathbb{A}_f^*$ denotes the multiplicative ring of finite Adèles and $\hat{\mathbb{Z}}$ the ring of profinite integers. For our analysis, we are interested in the behavior of the Riemann zeta function in the region $Re(s)=\sigma$ , hence we fix a real part $\sigma>0$ and consider the Pontryiagin dual instead: 
\[\Omega:=Hom_{cont}(\mathbb{A}_f^*/\hat{\mathbb{Z}}^*,S^1)=\prod_{p<\infty} i\mathbb{R}/\frac{2\pi i}{\log p}\mathbb{Z}.\]
By Tychonoff's Theorem, $\Omega$ is a compact abelian group equipped with coordinatewise multiplication. Then we can take the \textit{off-diagonal zeta function} defined by (formally)
\begin{equation}
    \Omega\ni\chi\mapsto \prod_{p<\infty} \zeta_p(\sigma,\chi)=:\zeta(\sigma,\chi),
\end{equation}
where the expression, $\zeta_p(\sigma,\chi)=\int_{\mathbb{Z}_p}|x|_p^{\sigma}\chi(x)dx$ denotes the local zeta function taking values on $\widehat{\mathbb{Q}_p^* / \mathbb{Z}_p^*}$, that is, quasi characters with fixed real part $Re(\chi)=\sigma$. Since there exists a probability measure on the space $\Omega$,  we can ask about the behavior of $(1)$ in probabilistic terms. We have the following Lemma. 
\begin{lemma}
    The product (1) converges almost surely. The assignation $\chi\mapsto \zeta(\sigma,\chi)$ is a well defined complex-valued random variable on the space $\Omega$. 
\end{lemma}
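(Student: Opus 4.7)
The plan is to exploit the product structure of $\Omega$ to recast the convergence statement as a question about a series of independent random variables on a product probability space. Under normalized Haar measure, a random $\chi\in\Omega$ decomposes as a sequence of mutually independent coordinates $\chi_p$, each uniformly distributed on $i\mathbb{R}/\frac{2\pi i}{\log p}\mathbb{Z}$. The map $it_p\mapsto p^{-it_p}$ pushes this to the uniform measure on $S^1$, so setting $U_p:=p^{-it_p}$ gives a sequence of mutually independent random variables uniformly distributed on the unit circle. A direct evaluation of the integral defining $\zeta_p(\sigma,\chi)$ (as was done at the end of Section 2 for the diagonal Mellin transform) produces $\zeta_p(\sigma,\chi)=(1-p^{-\sigma}U_p)^{-1}$, which is everywhere defined and nonzero because $|p^{-\sigma}U_p|=p^{-\sigma}<1$ for $\sigma>0$.

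Next I would pass to logarithms and split into the diagonal contribution plus a remainder:
\[
\log\zeta(\sigma,\chi)=\sum_{p<\infty}\sum_{n\geq 1}\frac{(p^{-\sigma}U_p)^{n}}{n}=S_1(\chi)+R(\chi),\qquad S_1(\chi):=\sum_{p<\infty}p^{-\sigma}U_p.
\]
Because $U_p$ is uniform on $S^1$, each summand $p^{-\sigma}U_p$ is centered and has variance $p^{-2\sigma}$, and the summands are mutually independent. Provided $\sigma>\tfrac{1}{2}$, one has $\sum_p p^{-2\sigma}<\infty$, so Kolmogorov's three-series theorem (equivalently, the $L^2$-convergence criterion for independent centered series) yields almost-sure convergence of $S_1$. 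The remainder $R$ is dominated in modulus by $\sum_p\sum_{n\geq 2}p^{-n\sigma}/n$, which is finite for $\sigma>\tfrac{1}{2}$ (the $n=2$ tail controls the rest), so $R$ converges absolutely and uniformly on $\Omega$. Hence $\log\zeta(\sigma,\chi)$, and therefore the product (1), converges almost surely. Measurability of the limit is then automatic: each finite partial product is a continuous function of finitely many coordinates of $\chi$, and an almost-sure limit of measurable functions, extended by a fixed value on the null exceptional set, is measurable.

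The main obstacle is the behaviour of $S_1$ in the range $\tfrac{1}{2}<\sigma\leq 1$, where the series fails to converge absolutely and one must rely on the probabilistic cancellation between the independent phases $U_p$; this is precisely what the Kolmogorov criterion supplies, and it is the reason the statement belongs naturally to the compact group $\Omega$ rather than to any single vertical line $\operatorname{Re}(s)=\sigma$ in $\mathbb{C}$. The regime $\sigma>1$ is handled immediately by the Weierstrass $M$-test on each factor, and the lemma implicitly requires $\sigma>\tfrac{1}{2}$, in harmony with the Bohr--Jessen framework recalled in the introduction.
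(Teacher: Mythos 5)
Your argument is correct and is essentially the route the paper takes: you begin by realizing the coordinates $\chi_p$ as independent uniform phases $U_p$ on $S^1$, which is precisely the isomorphism $\Omega\cong\Omega'$ the paper singles out as the main observation, and then you reproduce the standard logarithm-plus-Kolmogorov argument that underlies the cited Lemma~5.1.7 of Laurin\v{c}ikas. The only difference is that you write out the details that the paper defers to the reference, and you correctly make explicit the implicit hypothesis $\sigma>\tfrac{1}{2}$.
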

\begin{proof}
    The proof follows from the proof of Lemma 5.1.7. in \cite{LimitLaurincikas}. The main observation is to notice that the space $\Omega$ is isomorphic to the space $\Omega'$ defined below.  
\end{proof}
In fact, product $(1)$ was already considered by Bagchi and later by Laurincickas and other authors in order to describe the property of universality of the Riemann zeta function in probabilistic terms. For more details, the reader may consult for example \cite{Bagchi,LimitLaurincikas}.  Nevertheless they presented this function in an equivalent way: let $S^1$ be the complex unit circle, and consider the copies indexed by the prime numbers $\{S^1_p\}$. Let 
\begin{equation} \Omega^{'}:=\prod_{p}S^1_p,
\end{equation}
which, by Tychonoff's Theorem, is a compact Abelian group equipped with coordinate-wise multiplication. Hence, there exists a unique probability measure on the $\sigma$-algebra of Borel sets. Define the assignment
\begin{equation}
    \omega \mapsto \zeta(\sigma,\omega)=\prod_{p<\infty } \left(1+\sum_{k=1}^{\infty}\frac{\omega(p)^{k}}{p^{k\sigma}}\right),
\end{equation}
where $\omega\mapsto \omega(p)$ is the canonical projection in the coordinate $p$. This is an almost surely convergent product and a well-defined $\mathbb{C}$-valued random element defined on $\Omega'$ (this is a slight variation of Lemma 5.1.7. in \cite{LimitLaurincikas}). Moreover, the following holds: 
\begin{equation}
    \zeta(\sigma,\omega)=\sum_{k=1}^{\infty} \omega(k)k^{-\sigma},
\end{equation}
where $\omega(k)$ its defined by $\omega(k):=\prod_{p}\omega(p)^{\alpha(p)}$, and $k=\prod_{p}p^{\alpha(p)}$. 
As we will see, the "off-diagonal zeta function" $(1)$ will allow us to give a probability interpretation of the Weil explicit formula which will introduced in the next chapter.

\subsection{Weil Explicit Formula}
In \cite{Riemannorig}, Riemann shows a fundamental result on how the prime numbers and the zeros of the Riemann zeta function are related. An "Explicit formula" was introduced to study the prime numbers less than a given magnitude. This formula was made rigorous and generalized by many authors, a particularly formidable formula was rigorously proved by von Mangoldt: 

\begin{equation}
    \sum_{1<n<x} \Lambda(n) + \frac{1}{2} \Lambda(x)=x-\sum_{\rho}\frac{x^{\rho}}{\rho} - \log(2\pi)-\frac{1}{2}\log(1-x^{2}),
\end{equation}
where $x>1$ (not necesarily an integer), and $\Lambda(y)=log(p)$ if $y>1$ is a positive power of the prime number $p$, and $0$ otherwise. The number $\rho$ runs through the zeros of Riemann zeta function. Although Riemann already knew the exact relation between the zeros of the zeta function and the distribution of the primes, it was Weil who "crystallized" this in his explicit sums (formula) \cite{Weilexplicit}. Given $f:\mathbb{R}_{+}\rightarrow \mathbb{R}$ smooth and compactly supported, we associate with $f$ its Mellin transform $M(f)(s)=\int_{0}^{\infty}f(x)x^{s} \frac{dx}{x}$. We have by the functional equation $\zeta_{\mathbb{Q}}(s)=\zeta_{\mathbb{Q}}(1-s)$:
\begin{equation}
    \begin{split}
        M(f)(0)+&M(f)(1)-\sum_{\zeta_{\mathbb{Q}}(s)=0} M(f)(s)=-\frac{1}{2\pi i} \oint M(f)(s)d\log\zeta_{\mathbb{Q}}(s)\\
        &=\sum_{p\leq \infty} \int_{1/2-i\infty}^{1/2+i\infty} M(f)(s)d\log\frac{\zeta_p(1-s)}{\zeta_p(s)},
    \end{split}
\end{equation}
where $\zeta_p(s)=(1-p^{-s})^{-1}$ for prime $p<\infty$, and $\zeta_{\infty}(s)=\pi^{-s/2}\Gamma(\frac{s}{2})$. Denote by 
\[W_p(f):= \int_{1/2-i\infty}^{1/2+i\infty} M(f)(s)d\log\frac{\zeta_p(1-s)}{\zeta_p(s)}=\log p \sum_{k\neq 0} f(p^k) \min(p^k,1),\]$$W_{\infty}(f):=\int_{1/2-i\infty}^{1/2+i\infty} M(f)(s)d\log\frac{\zeta_{\infty}(1-s)}{\zeta_{\infty}(s)},$$
and the \textit{finite Weil sum},
$$W_{fin}(f):=\sum_{p<\infty}W_p(f).$$
The Riemann Hypothesis is equivalent to the Weil Negativity Criterion of the \textit{Weil sum}:
\[W(f):=\sum_{p\leq \infty}W_p(f)\leq 0, \]
for complex-valued test functions of the positive half-line of the form 
\[f(x)=\int_{0}^{\infty} g(xy)\overline{g}(y)dy, \hspace{0.3cm} M(g)(1)=0, \ M(g)(0)=0. \]
    In fact, it is enough to prove the negativity of $W$ for functions $g$ with compact support in the (locally compact) multiplicative group $\mathbb{R}^{*}_{+}=(0,\infty)$. Furthermore, given any set of complex numbers $F\supset \{0,1\}$ with $ F\cap \{s:\xi(s)=0\}=\emptyset,$ we have that the Riemann hypothesis is equivalent to 
\[W(g*\overline{g}^{*})\leq 0, \]
for all $g\in C_{c}^{\infty}(\mathbb{R}_+^{*})$, such that $M(g)(s)=0$  for all $s\in F$ and $g^{*}(x)=\frac{1}{x} g(\frac{1}{x})$, see \cite{Yoshida,connespositive}. Following \cite{connespositive}, it is worth to notice that $W(f)$ only involves a finite set of primes when evaluated on a test function with compact support. That is, even though the Riemann Hypothesis is about the asymptotic distribution of the primes, this equivalent formulation involves finitely many primes at a time. With this in mind, for functions with support contained in $(1/2,2)$, we have that $W(f)=W_{\infty}(f)$. In this case, in \cite{Yoshida} and \cite{connespositive} showed that $W_{\infty}(f)\leq 0$ . In particular, in \cite{connespositive}, Connes and Consani give operator theoretic conceptual reasons for Weil's criterion. Our approach is different. One goal is to show the natural link between the $C^*$-algebra $C(\bohrc{})$ of continuous complex-valued functions over $\bohrc{}$, the Bohr compactification of $\mathbb{R}$, the probabilistic interpretation of almost periodic functions, and the Weil sum $W(f)$. 

\subsection{The Bohr Compactification}
Consider the real line $\mathbb{R}$ equipped with the additive commutative group structure. The Bohr compactification $\bohrc{}$ can be described as the set $Hom[ \mathbb{R},S^1]$ of all, not necessarily continuous, group morphisms from the group $\mathbb{R}$ to the multiplicative group $S^1$. Equivalently we can describe $\bohrc{}$ by the following 
\begin{definition}
    The spectrum of a unital Banach algebra $\mathcal{A}$ is the set of all non-zero $*$-homomorphisims $\chi:\mathcal{A}\rightarrow \mathbb{C}; a\mapsto \chi(a)$, called the characters.
\end{definition}
\begin{definition}
\begin{enumerate}
    \item For any $k\in \mathbb{R}$ define the periodic functions of period $2\pi / k$ by 
    \[T_k:\mathbb{R}\rightarrow \mathbb{C}; x\mapsto e^{ikx},\]
The $*$-algebra $\mathcal{C}$ of almost periodic functions is the finite complex linear span of the functions $T_k$, that is, functions of the form 
\begin{center}
$f=\sum_{I=1}^{N}z_IT_{k_I} $ where $N<\infty$, $k_I\in \mathbb{R}$, $z_I\in \mathbb{C}$
\end{center}
    \item Let $\overline{\mathcal{C}}$ be the closure of $\mathcal{C}$ with respect the sup-norm on $\mathbb{R}$. This is an abelian $C^*$-algebra with respect to pointwise operations and complex conjugation as involution. The spectrum of this algebra denoted by $\bohrc{}$ is called the Bohr compactification of $\mathbb{R}$.
\end{enumerate}  
\end{definition}
For more details the reader may consult \cite{bohrabstract,bohrgelfad,bohrrelativ}. Let $\chi \in \bohrc{}$, that is, an arbitrary homomorphism $\chi:\overline{\mathcal{C}}\rightarrow \mathbb{C}$ without any continuity assumption. Any such character is determined once we know its values $X(k):=\chi(T_k)$. Since 
\begin{equation}
    X(k)X(l)=X(k+l), \ \overline{X(k)}=X(-k),
\end{equation}
we have that $X:\mathbb{R}\rightarrow S^1$ is a group homomorphism which does not need to be continuous, that is $\bohrc{}=Hom[ \mathbb{R},S^1]$ as stated. The above compactification can be extended to any topological group, moreover it is worth noticing the following. Let $G=\mathbb{R}$ be the topological group of the real numbers with addition and the euclidean topology. Consider the Pontryagin dual of $G$ equipped with the discrete topology denoted by $\hat{G}_d$. Then the Pontryagin dual of this group coincides with $\bohrc{}$.  Hence, it is clear that for any self dual group $G$, its Bohr compactification is given by $Hom[G,S^1]$. 
\begin{remark}
    The space $\overline{\mathcal{C}}$ is also known as the set of uniformly almost periodic functions over $\mathbb{R}$. In fact any such function can be seen as the restriction $f|_{\mathbb{R}}$ of $f\in C(\bohrc{})$, the space of continuous functions over $\bohrc{}$. Moreover, the operator $f\mapsto f|_{\mathbb{R}}$ is an isometric $*-$isomorphisim of $C(\bohrc{})$ onto $\overline{\mathcal{C}}$.
\end{remark}

Let $f\in \overline{\mathcal{C}}$,  then we will denote by $\hat{f}:\bohrc{}\rightarrow \mathbb{C}$ the associated lift function given by $\hat{f}(\chi)=\chi(f)$ for every $\chi \in \bohrc{}$.  In particular, the maps $\hat{T}_k$  will represent the map $\chi \mapsto \chi(T_k)$. For a set of functions $A\subset \overline{\mathcal{C}}$, we set $\hat{A}=\{\hat{f}:f\in A\}$. The topology of $\bohrc{}$ can be described in different ways. For our purposes we will use different equivalent ways to describe said topology. Since $Hom[ \mathbb{R},S^1]$ is a closed group of $\prod_{k\in\mathbb{R}} S^1_k$, where $S^1_k=S^1$, and since this product is compact by Tychonoff's Theorem, we have that the induced topology on $\bohrc{}$ makes this space a compact topological space. Moreover, $\bohrc{}$ is a compact Hausdorff group, which implies the existence of a unique normalized Haar measure denoted by $d\chi$.  The construction of this measure can be realized using the Riesz Representation Theorem in the following way.  For any $f\in \overline{\mathcal{C}}$  and $x\in \mathbb{R}$ we have the following map $x\mapsto \chi_{x}(\cdot)$ where $\chi_x(f)=f(x)$. This map embeds the group $\mathbb{R}$ densely into the group $\bohrc{}$. Moreover, since the attached homomorphism $X\in Hom[\mathbb{R},S^1]$ is given by $X(k)=\chi_{x}(T_k)=e^{ikx}$. Therefore, by using (1), $\mathbb{R}$ is embedded into $\bohrc{}$ as the set of smooth homomorphisms. \newline

Let $S=\{T_k: k\in \mathbb{R}\}$ be the total set generating $\overline{\mathcal{C}}$. Then, by the Stone-Weierstrass Theorem, the algebra $span \ \widehat{S}$ is dense in $C(\bohrc{})$, since $\widehat{S}$ separates points and contains the constant functions.  Let $\Lambda :span \ \widehat{S} \subset C(\bohrc{}) \mapsto \mathbb{R}$ , be the positive linear functional given by 
\[\Lambda(\hat{T_{k}})= \delta_{k,0},\]
where $\delta$ denotes the Kronecker delta. By the Riesz-Markov representation Theorem, there is a unique Radon measure (which can be easily proved to be the Haar measure with the standard normalization), such that 
\[\Lambda(F)=\int_{\bohrc{}} F(\chi) d\chi. \]
This implies that the set $\widehat{S}$  is an orthonormal basis of $L^2(\bohrc{},d\chi)$, and since the function $\textbf{1}: \bohrc{} \rightarrow \mathbb{R}$, $\textbf{1}(\chi)=1$ is given by $\textbf{1}=\hat{T}_{0}$, the measure of $\bohrc{}$ is $1$. Therefore, the space $(\bohrc{},\mathcal{B}, d\chi)$ is a probability space, where $\mathcal{B}$ is the $\sigma$- algebra of Borel sets of $\bohrc{}$. Henceforth we may refer to the probability space $(\bohrc{},\mathcal{B}, d\chi)$ as the Bohr space.  The following lemma relates the Haar measure of $\bohrc{}$ and the usual Lebesgue measure of $\mathbb{R}$. For the proof the reader may consult \cite[Equation  28.2.8]{bohrrelativ}.

\begin{lemma}
    Let $f\in C(\bohrc{}) $, then 
    $$\int_{\bohrc{}} f(\chi)d\chi=\lim_{T\rightarrow +\infty} \frac{1}{2T} \int_{-T}^{T} f|_{\mathbb{R}}(x) dx,$$
where $dx$ is the Lebesgue measure of $\mathbb{R}$. 
\end{lemma}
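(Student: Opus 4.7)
The plan is to verify the identity first on the total set of generators $\widehat{S}=\{\hat T_k : k\in\mathbb{R}\}$ and then extend by continuity to all of $C(\bohrc{})$, taking advantage of the uniform bound on the Cesàro-type averages that appear on the right-hand side.

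First I would compute both sides explicitly for $f=\hat T_k$. When $k=0$, $\hat T_0=\mathbf{1}$ and both sides equal $1$. When $k\neq 0$, the left-hand side is $\Lambda(\hat T_k)=\delta_{k,0}=0$ by the very definition of the functional $\Lambda$ that produced the Haar measure, while the restriction to $\mathbb{R}$ is $T_k(x)=e^{ikx}$ and a direct calculation gives
\[
\frac{1}{2T}\int_{-T}^{T}e^{ikx}\,dx=\frac{\sin(kT)}{kT}\xrightarrow[T\to\infty]{}0.
\]
By linearity the identity then holds on $\mathrm{span}\,\widehat{S}$, which by the Stone--Weierstrass argument already recorded in the text is dense in $C(\bohrc{})$.

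Next I would promote this pointwise equality on a dense subalgebra to the whole space by an equicontinuity argument. For each $T>0$ define the linear functional
\[
\Lambda_T(f):=\frac{1}{2T}\int_{-T}^{T}f|_{\mathbb{R}}(x)\,dx,\qquad f\in C(\bohrc{}).
\]
Because the restriction map $f\mapsto f|_{\mathbb{R}}$ is an isometric $*$-isomorphism of $C(\bohrc{})$ onto $\overline{\mathcal{C}}$ (Remark 1), one has $\|f|_{\mathbb{R}}\|_\infty=\|f\|_\infty$, so $|\Lambda_T(f)|\leq\|f\|_\infty$ uniformly in $T$. A standard three-$\varepsilon$ argument now applies: given $f\in C(\bohrc{})$ and $\varepsilon>0$, pick $g\in\mathrm{span}\,\widehat{S}$ with $\|f-g\|_\infty<\varepsilon/3$; then
\[
|\Lambda_T(f)-\Lambda(f)|\leq|\Lambda_T(f-g)|+|\Lambda_T(g)-\Lambda(g)|+|\Lambda(g-f)|,
\]
and for $T$ large enough all three summands are below $\varepsilon/3$, the middle one by the first step and the outer ones by the uniform bound $\|\Lambda_T\|,\|\Lambda\|\leq 1$.

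The main subtlety, and essentially the only non-routine point, is ensuring that the limit on the right is taken in the right sense for \emph{every} $f\in C(\bohrc{})$ rather than only for those in the dense subalgebra; the equicontinuity of $\{\Lambda_T\}_{T>0}$ together with the isometry of the restriction map is exactly what makes this extension legitimate. One could alternatively observe that, once existence of the limit is established, $f\mapsto\lim_T\Lambda_T(f)$ is positive, translation-invariant and of total mass one, so it must equal the normalized Haar integral by uniqueness of Haar measure on the compact group $\bohrc{}$; but the Stone--Weierstrass route above is more direct given what has already been set up in the preceding paragraphs.
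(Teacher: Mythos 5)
Your proof is correct and is the standard argument. The paper does not actually prove Lemma~2; it defers to an external reference (\cite[Eq.\ 28.2.8]{bohrrelativ}), so there is no in-paper proof to compare against. Your two-step structure --- explicit verification on the total set $\widehat{S}$, where $\Lambda(\hat T_k)=\delta_{k,0}$ matches $\frac{1}{2T}\int_{-T}^{T}e^{ikx}\,dx=\frac{\sin(kT)}{kT}\to\delta_{k,0}$, followed by a density/equicontinuity extension using the uniform bound $\absolute{\Lambda_T(f)}\le\norm{f}_\infty$ coming from the isometry of the restriction map --- is precisely the argument one expects, and it correctly uses only facts already established in the surrounding text (the definition of $\Lambda$ and the Haar measure, Stone--Weierstrass density of $\mathrm{span}\,\widehat{S}$, and Remark~1). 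Your closing alternative, characterizing the limit as the unique normalized translation-invariant positive functional, is also a valid route though it would require first establishing the existence of the limit for general $f$, which is exactly what the density argument provides; so the route you chose is indeed the more direct one given the preceding setup.
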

In particular for any $k$-periodic continuous function $f(t)$, we have 
\[\int_{\bohrc{}} \hat{f}(\chi)d\chi= \lim_{T\rightarrow +\infty} \frac{1}{2T} \int_{-T}^{T} f(x) dx= \frac{k}{2\pi} \int_{0}^{2\pi / k} f(t)dt. \]

\section{An arithmetic random process and Weil Explicit Sums.} 
\subsection{The $L^1(\mathbb{Q}_p^*)^{\mathbb{Z}_p^*}$ and $L^2(\mathbb{Q}_p^*)^{\mathbb{Z}_p^*}$ embedding.}
Let $L^2(\mathbb{Q}_p^*)^{\mathbb{Z}_p^*}=L^2(\mathbb{Q}_p^*/\mathbb{Z}_p^*)$, which by the Plancherel Theorem is isomorphic to $L^2(i\mathbb{R}/\frac{2\pi i}{\log p}\mathbb{Z})$, the later can be identified with the span of the functions $e^{ikt\log p},$ for $k\in \mathbb{Z}$. These functions can be identified with the functions $\hat{T}_{\log p^k}\in L^2(\bohrc{})$. Therefore the space $L^2(\mathbb{Q}_p^*)^{\mathbb{Z}_p^*}$ is isomorphic with the span of the functions of the form $\hat{T}_{\log p^k}$ for $k\in \mathbb{Z}$, a subspace of $L^2(\bohrc{})$. Moreover, the Mellin transform for real part $\sigma=0$ gives a linear isometry, that is, for $f\in L^2(\mathbb{Q}_p^*)^{\mathbb{Z}_p^*}$  
\begin{equation*}
    \begin{split}
        \mathcal{M}_p(f)(it)&=\int_{\mathbb{Q}_p^*} f(|x|_p)|x|_p^{it}d^*\mu_p(x)=\sum_{k\in \mathbb{Z}}f(p^{-k})p^{-ikt}\\
        &\mapsto \sum_{k\in \mathbb{Z}}f(p^{-k})\hat{T}_{\log p^{-k}}\in L^2(\bohrc{}),
    \end{split}
\end{equation*}
More generally, for $\sigma\geq0$,
\[\chi\mapsto \mathcal{M}_p(f)(\sigma,\chi)=\sum_{k=-\infty}^{\infty}f(p^{-k})p^{-\sigma k}\chi({T}_{log p^{-k}})\in L^2(\bohrc{}),\]
if and only if 
\[\sum_{k=-\infty}^{\infty} |f(p^{-k})|^2 p^{-2\sigma k}=\int_{\mathbb{Q}_p^{*}} |f(|x|_p)|^2 |x|_p^{2\sigma}d\mu_{p}^*(x)<+\infty,\]
in such case we have 
\begin{equation}
    ||\mathcal{M}_p(f)(\sigma,\cdot)||_{\bohrc{}}=||f(\cdot)|\cdot|_p^{\sigma}||_{\mathbb{Q}_p^*}.
\end{equation}
Note that this gives us a family of square-integrable complex random variables $\mathcal{M}_p(f)(\sigma,\chi)$ indexed by $f\in L^2(\mathbb{Q}_p^{*},|x|_p^{2\sigma}d^*\mu_p)$, that is
\begin{lemma}
    For every $0\leq \sigma<1$ the space $L^2(\mathbb{Q}_p^{*},|x|_p^{\sigma}d^*\mu_p)$ is linearly isometric to a subspace of $L^2(\bohrc{})$. 
\end{lemma}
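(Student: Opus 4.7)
The plan is to reduce the full statement to the $\mathbb{Z}_p^*$-invariant embedding already produced in the excerpt by performing a Peter-Weyl decomposition along the compact group $\mathbb{Z}_p^*$, and then to re-assemble the resulting pieces inside the (non-separable) Hilbert space $L^2(\bohrc{})$ using a shifted family of the basis vectors $\hat{T}_r$.

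First, I would write $\mathbb{Q}_p^* \cong \mathbb{Z}_p^* \times p^{\mathbb{Z}}$ via the uniformizer. Since the weight $|x|_p^\sigma$ depends only on the second factor, Peter-Weyl for the compact abelian group $\mathbb{Z}_p^*$ (applied fiberwise over $p^{\mathbb{Z}}$) yields an orthogonal decomposition
\[
L^2(\mathbb{Q}_p^*,|x|_p^\sigma d^*\mu_p) \;=\; \bigoplus_{\eta \in \widehat{\mathbb{Z}_p^*}} H_\eta, \qquad H_\eta := \bigl\{f : f(u x) = \eta(u) f(x)\ \forall u \in \mathbb{Z}_p^*\bigr\},
\]
and the index set $\widehat{\mathbb{Z}_p^*}$ is countable because every character of $\mathbb{Z}_p^*$ factors through some finite quotient $(\mathbb{Z}/p^n\mathbb{Z})^*$. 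Evaluation $f \mapsto (f(p^{-k}))_{k\in\mathbb{Z}}$ identifies each $H_\eta$ isometrically with the weighted sequence space $\ell^2(\mathbb{Z},\, p^{-\sigma k})$, in direct parallel with the scalar computation of norms performed in the excerpt.

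Next, I would fix an enumeration $\eta_1 = \mathbf{1},\, \eta_2,\, \eta_3, \ldots$ of $\widehat{\mathbb{Z}_p^*}$, set $r_n := n - 1$, and define
\[
\Psi_{\eta_n}(f) \;:=\; \sum_{k\in\mathbb{Z}} f(p^{-k})\, p^{-k\sigma/2}\, \hat{T}_{r_n - k\log p} \;\in\; L^2(\bohrc{}),\qquad f \in H_{\eta_n}.
\]
For $n=1$ this is essentially the Mellin embedding already given, with the real part of the quasi-character halved so as to match the weight $|x|_p^\sigma$ instead of $|x|_p^{2\sigma}$. The proposed isometric embedding is then the orthogonal sum $\Psi := \bigoplus_n \Psi_{\eta_n}$.

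The crux, and the only nontrivial step, is to verify that the doubly-indexed family $\{\hat{T}_{r_n - k\log p}\}_{(n,k)\in\mathbb{N}\times\mathbb{Z}}$ is orthonormal in $L^2(\bohrc{})$; granted this, the identity $\|\Psi(f)\|^2_{\bohrc{}} = \|f\|^2$ follows from Parseval both on $L^2(\bohrc{})$ and in each isotypic slice. Since $\{\hat{T}_r\}_{r\in\mathbb{R}}$ is an orthonormal basis of $L^2(\bohrc{})$, the required orthonormality reduces to asking that the reals $r_n - k\log p$ be pairwise distinct as $(n,k)$ ranges over $\mathbb{N}\times\mathbb{Z}$. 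An equality $r_n - k\log p = r_m - k'\log p$ rearranges to $n - m = (k-k')\log p$, whose left-hand side is a rational integer and whose right-hand side is a rational multiple of the irrational (in fact transcendental, by Lindemann--Weierstrass) number $\log p$; both sides must therefore vanish, forcing $n=m$ and $k=k'$. The hypothesis $0 \leq \sigma < 1$ does not enter this argument and presumably reflects the range in which the lemma is invoked later in the paper.
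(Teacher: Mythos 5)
Your construction is correct, but it does more work than the paper does, and the extra work is of a different nature. In the paper this lemma carries no separate proof: it is a restatement of the displayed Plancherel identity (8), $\|\mathcal{M}_p(f)(\sigma,\cdot)\|_{\mathbb{B}}=\|f(\cdot)|\cdot|_p^{\sigma}\|_{\mathbb{Q}_p^*}$, for $\mathbb{Z}_p^*$-invariant $f$; that is, the section (titled ``The $L^1(\mathbb{Q}_p^*)^{\mathbb{Z}_p^*}$ and $L^2(\mathbb{Q}_p^*)^{\mathbb{Z}_p^*}$ embedding'') implicitly reads the space in the lemma as its unramified part $L^2(\mathbb{Q}_p^*/\mathbb{Z}_p^*)$, and the isometry is exactly your $n=1$ summand, sending $f(p^{-k})$ (suitably weighted) to the coefficient of $\hat{T}_{-k\log p}$ and applying Parseval on both sides. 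Your Peter--Weyl decomposition over $\widehat{\mathbb{Z}_p^*}$ together with the frequency shifts $r_n$ handles the ramified isotypic components, which the paper never uses; this proves the lemma as literally stated, at the cost of an embedding that for $n\geq 2$ is no longer canonical: it exploits only the fact that $L^2(\mathbb{B})$ has an uncountable orthonormal basis, and indeed any separable Hilbert space embeds isometrically into $L^2(\mathbb{B})$ by sending an orthonormal basis to distinct $\hat{T}_r$'s, so the Peter--Weyl bookkeeping is strictly speaking unnecessary for the bare conclusion. Two of your side remarks are on target: the exponent discrepancy ($|x|_p^{\sigma}$ in the lemma versus $|x|_p^{2\sigma}$ in the sentence introducing it) is genuinely present in the paper and your halving of the weight is the right repair, and the hypothesis $0\leq\sigma<1$ plays no role in the isometry itself. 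Your irrationality argument for the pairwise distinctness of the exponents $r_n-k\log p$ is sound and is the same rational-independence of the numbers $\log p$ that the paper invokes later in the proof of Theorem 6.
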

\noindent Now, let $M,N\in L^2( \bohrc{})$ two random variables. Then the following holds 
    \begin{enumerate}
        \item $E[M]=\langle M, \hat{T}_0\rangle_{\bohrc{}}$
        \item $Var[M]=||M||_{L^2(\bohrc{})}^2-|\Lambda(M)|^2$  
        \item The covariance of $M$ and $N$ satisfies, $Cov[M,N]=\langle M, N \rangle_{L^2(\bohrc{})}-\Lambda(M)\overline{\Lambda(N)}.$
    \end{enumerate}
In particular, for a random variable of the form $\mathcal{M}_p(f)(\sigma,\cdot)$ we have $$E[\mathcal{M}_p(f)(\sigma,\cdot)]=\langle \mathcal{M}_p(f)(\sigma,\cdot),\hat{T}_0\rangle_{L^2(\bohrc{})}=f(1).$$
\begin{remark}
More generally we have the following random variables. Let $L(s)=\sum_{n=1}^{\infty} a_n e^{-\lambda_n s}$ be a Dirichlet series, convergent in the half-plane $Re(s)>\sigma_0$, and denote by $L_n(s)$ its partial sum. Let $s=\sigma+it$, where $\sigma$ is fixed and such that $t\mapsto L_n(\sigma+it) $  converges to $t\mapsto L(\sigma+it)$ with respect to $||\cdot||_{\infty}$ , then it is clear that 
\[L= \sum_{n=1}^{\infty} a_n e^{-\lambda_n\sigma} \hat{T}_{-\lambda_n}, \]defines a continuous random variable on the Bohr space. 
\end{remark}
Let $f^*(x)=\frac{1}{|x|_p}f(\frac{1}{|x|_p})$ for $x\neq 0$.   $L^1(\mathbb{Q}_p^*,d \mu_p^*)^{\mathbb{Z}_p^{*}}$  equipped with the multiplicative convolution and the involution given by $f\mapsto |x|_p\overline{f}^{*}=:f^{\#}(x)$ is a *-algebra. The Mellin transform satisfies
\[\mathcal{M}_p(\overline{f}^*)(\sigma,\chi)=\overline{\mathcal{M}_p(f)(1-\sigma,\chi)},\]
 and for $\sigma=\frac{1}{2}$ we have
\[\mathcal{M}_p(\overline{f}^*)(\frac{1}{2},\chi)=\overline{\mathcal{M}_p(f)(\frac{1}{2},\chi)},\]
for $M_p(f)(\frac{1}{2},\cdot)$ continuous the map $f \mapsto \overline{f}^{*}$ is compatible with the involution on $C(\bohrc{})$.  
\begin{proposition}
    The space $L^1(\mathbb{Q}_p)^{\mathbb{Z}_p^*}$ is embedded in $C(\bohrc{})$, moreover the involutions of *-algebras are compatible. 
    \end{proposition}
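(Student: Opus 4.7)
The plan is to define the embedding as the lift to $\bohrc{}$ of the Mellin transform evaluated on the critical line $\sigma=1/2$: for $f$ with Mellin coefficients $a_k:=f(p^{-k})$, set
\[
\Phi(f):=\sum_{k\in\mathbb{Z}}a_k\,p^{-k/2}\,\hat{T}_{-k\log p},
\]
where the right-hand side is to be interpreted as the uniform limit of its finite partial sums in $C(\bohrc{})$. The choice $\sigma=1/2$ is forced by the identity $\mathcal{M}_p(\bar f^{*})(1/2,\chi)=\overline{\mathcal{M}_p(f)(1/2,\chi)}$ established just above the proposition, as this is the unique real part on which the $L^1$-involution $f\mapsto\bar f^{*}$ is transported to pointwise complex conjugation on $C(\bohrc{})$.

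The first step is to verify that $\Phi(f)$ actually lands in $C(\bohrc{})$. Each character $\hat{T}_{-k\log p}$ is unimodular, so absolute summability of the coefficients, $\sum_k|a_k|\,p^{-k/2}<\infty$, is sufficient; this is exactly the $L^1$-norm of $f$ with weight $|x|_p^{1/2}$ against $d^{*}\mu_p$, the natural $L^1$-analogue of the weighted $L^2$ setting of Lemma 3. Uniform limits of continuous functions on the compact Hausdorff space $\bohrc{}$ remain continuous, hence $\Phi(f)\in C(\bohrc{})$ with $\|\Phi(f)\|_\infty\le \||\cdot|_p^{1/2}f\|_{L^1(\mathbb{Q}_p^{*})}$. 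Injectivity is then immediate from the orthonormality of $\{\hat{T}_{k\log p}\}_{k\in\mathbb{Z}}$ in $L^2(\bohrc{},d\chi)$: pairing $\Phi(f)$ against $\overline{\hat{T}_{-k\log p}}$ recovers $a_k p^{-k/2}$, so $\Phi(f)=0$ forces $f\equiv 0$.

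Compatibility with both algebraic structures then follows from two standard identities. First, the Mellin transform turns multiplicative convolution into pointwise multiplication, $\mathcal{M}_p(f*g)(s)=\mathcal{M}_p(f)(s)\mathcal{M}_p(g)(s)$, so $\Phi(f*g)=\Phi(f)\Phi(g)$ inside the commutative $C^{*}$-algebra $C(\bohrc{})$. Second, applying the identity $\mathcal{M}_p(\bar f^{*})(1/2,\chi)=\overline{\mathcal{M}_p(f)(1/2,\chi)}$ pointwise in $\chi$ gives $\Phi(\bar f^{*})=\overline{\Phi(f)}$, exhibiting the intertwining with complex conjugation on $C(\bohrc{})$.

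The delicate point I expect to grapple with is pinning down the correct integrability class: the plain $L^1(\mathbb{Q}_p)$- or $L^1(\mathbb{Q}_p^{*})$-norm does not by itself control $\sum_k|a_k|p^{-k/2}$ as $k\to-\infty$, so the space appearing in the proposition must be read as the weighted $L^1$-space with weight $|x|_p^{1/2}$, consistently with the $L^2$-convention of Lemma 3. A cleaner route that sidesteps this subtlety is to establish the four properties (continuity, injectivity, multiplicativity, involution) on the dense subalgebra of finitely supported coefficient sequences, where every manipulation is a genuine finite sum and no interchange of limits is required, and then to extend $\Phi$ by continuity using the uniform bound above.
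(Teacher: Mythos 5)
Your route is essentially the paper's: embed via the Mellin transform on the critical line $\sigma=1/2$, use the identity $\mathcal{M}_p(\overline{f}^{*})(1/2,\chi)=\overline{\mathcal{M}_p(f)(1/2,\chi)}$ for the involution, and get continuity on $\bohrc{}$ from uniform convergence, with multiplicativity coming from the convolution theorem. The one substantive gap is in how you resolve the ``delicate point'' about integrability. You conclude that the proposition ``must be read as the weighted $L^1$-space with weight $|x|_p^{1/2}$,'' and you carry the involution $f\mapsto \overline{f}^{*}$ on that weighted space. But the paper really does mean the unweighted space $L^1(\mathbb{Q}_p^*,d\mu_p^*)^{\mathbb{Z}_p^*}$, equipped with the $\#$-involution $g\mapsto g^{\#}=|x|_p\,\overline{g}^{*}$ (defined in the paragraph just before the proposition), and the proof handles your summability worry by prepending the re-weighting map
\[
\lambda: L^1(\mathbb{Q}_p^*,d\mu_p^*)^{\mathbb{Z}_p^*}\longrightarrow L^1(\mathbb{Q}_p^*,|x|_p^{1/2}d\mu_p^*)^{\mathbb{Z}_p^*},\qquad \lambda(g)(x)=|x|_p^{-1/2}g(x),
\]
which is an isometry that carries $\#$ to $\overline{(\cdot)}^{*}$. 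Then the embedding is $g\mapsto \mathcal{M}_p(\lambda(g))(1/2,\cdot)$, and at $\sigma=1/2$ the weight cancels exactly:
\[
\mathcal{M}_p(\lambda(g))\left(\tfrac{1}{2}+it\right)=\sum_{k\in\mathbb{Z}} g(p^{-k})\,p^{-ikt},
\]
so the coefficients are plain $g(p^{-k})$ and absolute summability is precisely the unweighted $L^1$ hypothesis (Weierstrass $M$-test does the rest). In short: your ``finitely supported sequences, then extend by continuity'' backup does not by itself fix the problem, because the extension still needs a choice of norm on the domain, and the unweighted $L^1$ norm does not control $\sum_k |a_k|p^{-k/2}$; the missing idea is the isometric $\lambda$, which both bridges the two $L^1$ norms and converts the $\#$-involution into the one your $\sigma=1/2$ identity is compatible with, so the statement is correct as written and needs no re-interpretation.
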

\begin{proof}
    Consider the map $\lambda:L^1(\mathbb{Q}_p^*,d \mu_p^*)^{\mathbb{Z}_p^{*}}\rightarrow L^1(\mathbb{Q}_p^*,|x|_p^{1/2}d \mu_p^*)^{\mathbb{Z}_p^{*}}$, given by $\lambda(g)(x)=f(x)$, where $f(x)=|x|_p^{-1/2}g(x)$, then this map take the involution $g\mapsto g^{\#}$ to the involution $f\mapsto \overline{f}^{*}$. Moreover, the function $\mathcal{M}_p(f)(\frac{1}{2},\chi)$ is an element of the algebra $C(\bohrc{})$. Indeed, since 
\[\mathcal{M}_p(f)(\frac{1}{2}+it)=\sum_{k=-\infty}^{\infty} g(p^{-k})p^{-ikt}, \]
by the Weierstrass $M$-test, the series converges uniformly, and can be extended to a continuous function on $\bohrc{}$.
\end{proof}

\begin{remark}
    Moreover, by equation $(8)$ the map $\mathcal{M}_p(\cdot)(1/2,\cdot)\circ \lambda:L^2(\mathbb{Q}_p^{*},d\mu_p^{*})^{\mathbb{Z}_p^*}\rightarrow L^2(\bohrc{})$, is a linear isometry.
\end{remark}
The convolution and multiplication on $C(\bohrc{})$ are related by the well now convolution theorem:
\begin{proposition}
Let $f,g$ such that $\mathcal{M}_p(f)(\sigma,\cdot)$ and $\mathcal{M}_p(g)(\sigma,\cdot)$ exists. Then it holds true that 
\[\mathcal{M}_p(f*g)(\sigma,\cdot)=\mathcal{M}_p(f)(\sigma,\cdot)\mathcal{M}_p(g)(\sigma,\cdot).\]
\end{proposition}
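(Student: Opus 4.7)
The plan is to reduce this identity to the classical convolution-to-product rule on the discrete dual of $\mathbb{Q}_p^*/\mathbb{Z}_p^*$, exploiting the $\mathbb{Z}_p^*$-invariance of $f$ and $g$. The key algebraic ingredient is that both $|\cdot|_p^{\sigma}$ and the character $\chi$ are multiplicative homomorphisms on $\mathbb{Q}_p^*$, so that the kernel $|x|_p^{\sigma}\chi(x)$ of the Mellin transform factorizes under the group operation $x=yz$.

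First, I would unpack the left-hand side using the definition of the multiplicative convolution
\[(f*g)(x)=\int_{\mathbb{Q}_p^*}f(y)\,g(y^{-1}x)\,d^*\mu_p(y),\]
and write $\mathcal{M}_p(f*g)(\sigma,\chi)$ as the iterated integral of $f(y)g(y^{-1}x)|x|_p^{\sigma}\chi(x)$. After applying Fubini's theorem, the substitution $z=y^{-1}x$ (preserving $d^*\mu_p$ by translation invariance of the Haar measure on $\mathbb{Q}_p^*$) together with $|yz|_p^{\sigma}=|y|_p^{\sigma}|z|_p^{\sigma}$ and $\chi(yz)=\chi(y)\chi(z)$ decouples the double integral into the product of $\mathcal{M}_p(f)(\sigma,\chi)$ and $\mathcal{M}_p(g)(\sigma,\chi)$.

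Equivalently, and perhaps more transparently given the present setup, I would pass to the series form: since $f,g$ are $\mathbb{Z}_p^*$-invariant the convolution becomes the discrete convolution $(f*g)(p^{-k})=\sum_{j\in\mathbb{Z}}f(p^{-j})g(p^{-(k-j)})$, and the Mellin transform becomes the Fourier-type series $\sum_{k}f(p^{-k})p^{-k\sigma}\chi(T_{\log p^{-k}})$. Reindexing the resulting double sum by $(j,m)=(j,k-j)$ and using $T_{\alpha+\beta}=T_{\alpha}T_{\beta}$ together with the multiplicativity of $\chi$ yields the factorization into a product of two single sums, which are precisely $\mathcal{M}_p(f)(\sigma,\chi)$ and $\mathcal{M}_p(g)(\sigma,\chi)$.

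The main technical point I expect to be the only nontrivial obstacle is justifying the interchange of integration (or of summation) that underlies both approaches. The hypothesis that $\mathcal{M}_p(f)(\sigma,\cdot)$ and $\mathcal{M}_p(g)(\sigma,\cdot)$ exist should be read as asserting that $f(\cdot)|\cdot|_p^{\sigma}$ and $g(\cdot)|\cdot|_p^{\sigma}$ both belong to $L^1(\mathbb{Q}_p^*,d^*\mu_p)^{\mathbb{Z}_p^*}$; Young's inequality for the multiplicative convolution on $\mathbb{Q}_p^*$ then guarantees that $(f*g)(\cdot)|\cdot|_p^{\sigma}$ is again $L^1$ and that the double integral of the absolute value is finite, so Fubini applies and the rearrangement of the double series is legitimate. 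Once this is settled, the rest is the purely algebraic factorization outlined above.
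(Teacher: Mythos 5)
The paper states this result as ``the well known convolution theorem'' and supplies no proof at all, so there is no internal argument to compare against. Your argument is correct and is precisely the standard proof: the Fubini/change-of-variables computation on the group $\mathbb{Q}_p^*$, or equivalently the Cauchy-product rearrangement of the discrete series on $\mathbb{Q}_p^*/\mathbb{Z}_p^*\cong p^{\mathbb{Z}}$. Your reading of the existence hypothesis as $f(\cdot)|\cdot|_p^{\sigma},\,g(\cdot)|\cdot|_p^{\sigma}\in L^1(\mathbb{Q}_p^*,d^*\mu_p)^{\mathbb{Z}_p^*}$ is the natural one, and your Young's-inequality justification is clean once one notes (as you implicitly do) that $|\cdot|_p^{\sigma}$ is a quasi-character, so that $(f*g)\,|\cdot|_p^{\sigma}=(f\,|\cdot|_p^{\sigma})*(g\,|\cdot|_p^{\sigma})$ and $L^1*L^1\subset L^1$ applies directly.
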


\subsection{Interlude: The Bohr-Jessen limit theorem.}
The value-distribution theory is the life-work of Harald Bohr. In the 1930's, Bohr was able to prove the following fundamental result: let $R\subset \mathbb{C}$ be a rectangle in the complex plane, whose edges are parallel to the coordinate axes. Denote by $\mu_0$ the one-dimensional Lebesgue measure. From the half plane  $\sigma>\frac{1}{2}$, we remove all the points which have the same imaginary part as, and smaller real part than, one of the possible zeros or the pole of $\zeta$ in this region, and the remaining part we denote by $G$. Then the following holds, 

\begin{theorem}
   
    (\textbf{Bohr-Jessen} \cite{Bohrjessenlimit})  For any $\sigma>\frac{1}{2}$, the limit value
\[B(R,\sigma,\zeta)=\lim_{T\rightarrow \infty} \frac{1}{2T} \mu_0(\{t\in[-T,T] : \sigma+it \in G, \ \log \zeta(\sigma+it)\in R\}),\]
exists.
\end{theorem}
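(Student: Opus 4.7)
The plan is to realise $t\mapsto \log\zeta(\sigma+it)$ on $\mathbb{R}$ as (the restriction to $\mathbb{R}\hookrightarrow\bohrc{}$ of) a complex random variable defined on $\bohrc{}$, and then read off $B(R,\sigma,\zeta)$ as the measure of the rectangle $R$ under the pushforward distribution.

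First I would truncate the Euler product. For each prime $N$, set $\zeta_N(s)=\prod_{p\leq N}\zeta_p(s)$, so that
\begin{equation*}
\log\zeta_N(\sigma+it)=\sum_{p\leq N}\sum_{k\geq 1}\frac{p^{-k\sigma}}{k}\,T_{-k\log p}(t)
\end{equation*}
is a uniformly convergent series of characters $T_\ell$, hence lies in $\overline{\mathcal{C}}$. By Remark 1 it lifts to $F_N\in C(\bohrc{})$ given by $F_N(\chi)=\sum_{p\leq N}\sum_{k\geq 1}\frac{p^{-k\sigma}}{k}\chi(T_{-k\log p})$. Applying Lemma 3 to $\varphi\circ F_N$ for a bounded continuous $\varphi:\mathbb{C}\to\mathbb{R}$ yields
\begin{equation*}
\lim_{T\to\infty}\frac{1}{2T}\int_{-T}^{T}\varphi\bigl(\log\zeta_N(\sigma+it)\bigr)\,dt=\int_{\bohrc{}}\varphi(F_N(\chi))\,d\chi,
\end{equation*}
which is weak convergence of the empirical laws $\nu_{N,T}$ of $\log\zeta_N(\sigma+it)|_{[-T,T]}$ to $\nu_N:=(F_N)_{*}\,d\chi$ as $T\to\infty$.

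Next I would pass to $N\to\infty$. Since the functions $\chi\mapsto \chi(T_{-k\log p})$ for distinct pairs $(p,k)$ are pairwise orthogonal in $L^2(\bohrc{})$, and $\sum_{p>N}\sum_{k\geq 1} p^{-2k\sigma}/k^{2}\to 0$ because $\sigma>1/2$, the sequence $F_N$ is Cauchy in $L^2(\bohrc{})$; by Lemma 1 its almost-sure limit agrees off a null set with $F_\infty(\chi):=\log\zeta(\sigma,\chi)$. Hence $\nu_N$ converges weakly to $\nu:=(F_\infty)_{*}\,d\chi$ on $\mathbb{C}$. A diagonal exchange of the limits $T\to\infty$ and $N\to\infty$, justified by the mean-square bound
\begin{equation*}
\frac{1}{2T}\int_{-T}^{T}\bigl|\log\zeta(\sigma+it)-\log\zeta_N(\sigma+it)\bigr|^{2}\,dt\xrightarrow[N\to\infty]{}0 \quad\text{uniformly in }T
\end{equation*}
(again a consequence of Lemma 3 applied to the tail sum over $p>N$), produces the identity
\begin{equation*}
\lim_{T\to\infty}\frac{1}{2T}\int_{-T}^{T}\varphi\bigl(\log\zeta(\sigma+it)\bigr)\,dt=\int_{\bohrc{}}\varphi(F_\infty(\chi))\,d\chi
\end{equation*}
for every bounded continuous $\varphi:\mathbb{C}\to\mathbb{R}$.

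To replace $\varphi$ by the indicator $\mathbf{1}_{R}$ and close the argument, I would invoke the Portmanteau theorem: weak convergence of probability measures implies convergence on every Borel set whose boundary is a $\nu$-null set. The main obstacle is therefore to prove that $\nu$ is absolutely continuous with respect to Lebesgue measure on $\mathbb{C}$, so that $\nu(\partial R)=0$ for every axis-parallel rectangle $R$. This is the classical Jessen step: one realises $\nu$ as an infinite convolution of the laws of the $S^1_p$-valued summands $\sum_{k\geq 1}\tfrac{p^{-k\sigma}}{k}\omega(p)^{k}$ living on the independent factors of $\Omega'\cong\bohrc{}$, and uses the $\mathbb{Q}$-linear independence of $\{\log p\}_p$ together with $\sigma>1/2$ to show that enough smoothness accumulates from the independent rotations at each prime. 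Finally, the restriction to $G$ contributes only $\mu_0$-measure zero inside any $[-T,T]$ (removing a finite union of vertical lines) and so is harmless in the Cesàro mean, completing the existence proof of $B(R,\sigma,\zeta)=\nu(R)$.
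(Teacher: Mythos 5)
The paper itself offers no proof of Theorem~1 --- it is a verbatim citation of the classical Bohr--Jessen result, and the surrounding text only remarks that the original proof rests on Bohr and Jessen's geometric theory of sums of convex curves. So I am evaluating your argument on its own.

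Your outline follows the modern Bagchi--Laurin\v{c}ikas route (truncate the Euler product, identify the truncation with an element of $C(\bohrc{})$, use Lemma~3 to get convergence of the empirical laws for each truncation, then pass to the limit in $N$). Steps (1)--(2) and the final Portmanteau/Jessen absolute-continuity step are sound in spirit. The genuine gap is in the exchange of limits. You assert that
\[
\frac{1}{2T}\int_{-T}^{T}\bigl|\log\zeta(\sigma+it)-\log\zeta_N(\sigma+it)\bigr|^{2}\,dt\;\longrightarrow\;0\quad\text{uniformly in }T
\]
is ``again a consequence of Lemma~3 applied to the tail sum over $p>N$.'' This cannot be right. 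For $\tfrac12<\sigma\leq 1$ the tail $\sum_{p>N}\sum_{k\geq 1}\tfrac{p^{-k\sigma}}{k}T_{-k\log p}$ is not a uniformly convergent trigonometric series, hence does not lie in $\overline{\mathcal{C}}$, and Lemma~3 --- which is stated for $f\in C(\bohrc{})$ --- simply does not apply to it. The Cauchyness of $F_N$ that you establish is a statement about the Hilbert space $L^2(\bohrc{})$; it controls the random element $\log\zeta(\sigma,\chi)$ on the Bohr side, but says nothing a priori about the actual Dirichlet series $t\mapsto\log\zeta(\sigma+it)$ on the real line, since the $L^2$-limit $F_\infty$ is discontinuous and has no canonical restriction to the densely embedded copy of $\mathbb{R}$. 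Identifying the two is precisely the analytic heart of the Bohr--Jessen theorem: one needs a Carlson-type mean-value theorem for $\log\zeta$ on the line $\Re s=\sigma$, together with control of the contribution near zeros of $\zeta$ (zero-density estimates / the set $G$), which is the ``advanced geometric theory'' the paper alludes to. Also, even with the correct analytic input one typically obtains $\limsup_{T\to\infty}$ of that second moment small, not a bound uniform in $T$. Without this input the step is circular, and the proof as written does not establish the theorem.
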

Bohr and Jessen themselves developed an advanced geometric theory of sums of closed convex curves whose results are used in the proof of Theorem 1. Many alternative proofs of this theorem have been established since then, but in this small interlude we are interested in the techniques used by Laurincikas  and Bachchi. Laurincikas was able to obtain the limit theorem of the following form \cite{LimitLaurincikas}: define the probability measure $P_T$ on $\mathbb{C}$ by 
\[P_T(A,\sigma,\zeta)=\frac{1}{2T}\mu_0(\{t\in [-T,T]: \zeta(\sigma+it)\in A) \}\]
for any Borel set $A$ of $\mathbb{C}$. Then
\begin{theorem}
    For any $\sigma>\frac{1}{2},$ $P_T(A,\sigma,\zeta)$ is weakly convergent to a certain probability measure $Q(A,\sigma,\zeta)$ as $T\rightarrow \infty$. 
\end{theorem}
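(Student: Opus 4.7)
The plan is to follow the classical approximation-and-equidistribution strategy of Bagchi, which in our setup amounts to transporting the time mean on $[-T,T]$ to the compact group $\Omega'=\prod_p S^1_p$ via the Kronecker flow $t\mapsto(p^{-it})_{p}$ and identifying the weak limit of $P_T$ with the push-forward of the Haar measure on $\Omega'$ under the random variable $\omega\mapsto\zeta(\sigma,\omega)$ from equation (3). The four ingredients are (i) an analytic approximation $\zeta_N$ of $\zeta$ by a finitely supported object on $\Omega'$, (ii) weak convergence of the $N$-th pushed-forward measure for each fixed $N$ via equidistribution, (iii) weak convergence of the limit measures $Q_N$ as $N\to\infty$, and (iv) a uniform-in-$T$ approximation lemma letting us swap the two limits.

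For step (i), since the Euler product diverges for $1/2<\sigma\le 1$, I would not use a partial Euler product but a smooth Mellin cut-off
\[
\zeta_N(s):=\sum_{n=1}^{\infty}\frac{v(n/N)}{n^{s}},\qquad v\in C_c^{\infty}(\mathbb{R}_+^*),\ v(0)=1,
\]
which for $\sigma>1/2$ is an absolutely convergent Dirichlet series, together with its companion $\zeta_N(\sigma,\omega)=\sum_n v(n/N)\omega(n)n^{-\sigma}$ on $\Omega'$ obtained by the substitution $p^{-it}\mapsto\omega(p)$ and the convention $\omega(n)=\prod_p\omega(p)^{\alpha(p)}$. For step (ii), $\zeta_N$ depends only on finitely many primes, so the map $t\mapsto(p^{-it})_{p\le N}$ factors through a finite-dimensional torus and is equidistributed by Kronecker's theorem, since $\{\log p\}_{p\le N}$ is $\mathbb{Q}$-linearly independent; Weyl's criterion then yields weak convergence of the pushed-forward measure $P_{T,N}$ to the image measure $Q_N$ of the Haar measure under $\zeta_N(\sigma,\cdot)$. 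Step (iii) is almost automatic: by Lemma 1 the infinite product converges almost surely on $\Omega'$, so $\zeta_N(\sigma,\omega)\to\zeta(\sigma,\omega)$ almost surely and hence in distribution, giving a limit measure $Q$.

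The crux is step (iv), which I expect to be the main obstacle. One must show
\[
\limsup_{T\to\infty}\frac{1}{2T}\int_{-T}^{T}\bigl|\zeta(\sigma+it)-\zeta_N(\sigma+it)\bigr|^{2}\,dt\longrightarrow 0\quad\text{as }N\to\infty.
\]
This is where the analytic difficulty concentrates: the smoothed truncation $\zeta_N-\zeta$ has to be represented by a contour integral (shifting the Mellin contour across the critical strip using the functional equation and a Carlson-type second-moment bound for $\zeta$ on vertical lines with $\sigma>1/2$), and the resulting Dirichlet polynomials are controlled by the mean-value theorem
\[
\frac{1}{2T}\int_{-T}^{T}\Bigl|\sum_n a_n n^{-\sigma-it}\Bigr|^{2}dt=\sum_n\frac{|a_n|^2}{n^{2\sigma}}\bigl(1+O(n/T)\bigr).
\]
A Chebyshev inequality then converts this $L^2$-estimate into convergence in probability, uniformly in $T$. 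Finally, a standard triangular-array weak-convergence lemma (Billingsley, Theorem 4.2) combines steps (ii), (iii), and (iv) to conclude that $P_T$ converges weakly to $Q$, which is the desired probability measure $Q(\,\cdot\,,\sigma,\zeta)$. Everything outside step (iv) is either ergodic (Kronecker equidistribution) or measure-theoretic bookkeeping; the real analytic content lies in obtaining the Carlson mean-square bound that controls the tail of the smoothed Dirichlet series uniformly in $T$ throughout the region $\sigma>1/2$.
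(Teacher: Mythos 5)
The paper does not actually prove Theorem 2: it is quoted as a known background result and attributed to Laurin\v{c}ikas \cite{LimitLaurincikas}, so there is no internal proof to compare against. Your outline is the standard Bagchi--Laurin\v{c}ikas scheme and is correct in its architecture: mollified Dirichlet-polynomial approximation $\zeta_N$, equidistribution of $t\mapsto(p^{-it})_{p\le N}$ on a finite torus via the $\mathbb{Q}$-linear independence of $\{\log p\}$ and Weyl's criterion, passage to the limit $Q_N\Rightarrow Q$ by almost-sure convergence on $\Omega'$, and the two-limit interchange via a uniform-in-$T$ mean estimate fed into the triangular-array lemma (Billingsley). Two small adjustments are worth noting: in the reference the cutoff is usually an exponential weight $v_N(m)=\exp(-(m/N)^{\sigma_1})$ rather than a compactly supported bump, and the approximation lemma one actually proves and needs is the weaker $L^1$ version
\[
\lim_{N\to\infty}\limsup_{T\to\infty}\frac{1}{2T}\int_{-T}^{T}\bigl|\zeta(\sigma+it)-\zeta_N(\sigma+it)\bigr|\,dt=0,
\]
the $L^2$ form you wrote being sufficient but not necessary; also, for $\sigma>\tfrac12$ the Carlson second-moment bound is obtained directly (approximate functional equation or Montgomery--Vaughan mean value), one does not really shift a Mellin contour across the critical strip using the functional equation. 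Your identification of where the analytic weight sits --- the mean-square control of $\zeta$ on vertical lines $\sigma>\tfrac12$ --- is exactly right.
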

The probability measure $Q(A,\sigma,\zeta)$ can be given explicitly using the random variable (3), in fact we have that (\cite[Notes Chapter 5]{LimitLaurincikas}).
\begin{equation}
    \mu_{\Omega'}(\{\omega \in \Omega': \zeta(\sigma,\omega)\in A)\})= Q(A,\sigma,\zeta).
\end{equation}
\subsection{The distribution of $\zeta(\sigma,\chi)$}
Let $s\in \{s\in\mathbb{C}:s=\sigma+it,\sigma>1\}$. Then the Riemann zeta function is given by the following uniformly convergent series, 
\[\zeta(s)=\sum_{k=1}^{\infty} \frac{1}{n^s},
\]
then the assignment $t\mapsto \zeta(\sigma+it)$ defines an almost periodic function for each $\sigma>1$. Its extension to $C(\bohrc{})$ is then given by 
\[\chi\mapsto \zeta(\sigma,\chi)=\sum_{k=1}^{\infty} n^{-\sigma}\chi(e^{-it\log n}).\]
On the other hand, the behavior of this function in $\frac{1}{2}\leq\sigma <1$ changes dramatically, that is, for $\sigma$ in this region we do not have a continuous extension. Nevertheless, we still have an extension for $\frac{1}{2}<\sigma<1$ as an $L^2(\bohrc{})$-function. In order to prove this, we begin by stating the following theorem about on the convergence of a series of orthogonal random variables \cite[Theorem 2.9]{LimitLaurincikas}
\begin{theorem}
    Let $X_1,X_2,...$ be orthogonal random variables and
\[\sum_{k=1}^{\infty}E|X_m|^2 \log^2 k<\infty.\]
 Then the series 
\[\sum_{k=1}^{\infty}X_m\]
converges almost surely. 
\end{theorem}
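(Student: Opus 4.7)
The plan is to prove this classical result (the Menshov--Rademacher theorem) by combining a maximal inequality for orthogonal sums with a dyadic block decomposition, and concluding with Borel--Cantelli. Throughout I write $S_n = \sum_{k=1}^{n} X_k$ and use the notation $X_k$ (reading $X_m$ in the hypothesis as a typo for $X_k$).

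The main technical step is the Rademacher--Menshov maximal inequality: for any $N \geq 1$,
\[
E\!\left[\max_{1 \leq n \leq N} |S_n|^2\right] \leq C (\log_2 N)^2 \sum_{k=1}^{N} E|X_k|^2.
\]
I would prove this by induction on the dyadic level $J$ with $N \leq 2^J$. The key idea is to write any $n \leq 2^J$ in binary and telescope the partial sum $S_n$ as a sum of at most $J$ dyadic blocks; by orthogonality each block has $L^2$ norm squared equal to the sum of $E|X_k|^2$ over that block, and Cauchy--Schwarz across the (at most $J$) blocks produces one factor of $J$, while bounding the maximum over all binary patterns produces a second factor of $J$. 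Orthogonality is used crucially when estimating the $L^2$-norms of these dyadic block sums themselves.

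Once the maximal inequality is in hand, I would partition the indices into dyadic blocks $B_j = \{2^j + 1, \ldots, 2^{j+1}\}$ and apply the inequality to each block's fluctuation to obtain
\[
E\!\left[\max_{n \in B_j} |S_n - S_{2^j}|^2\right] \leq C\, j^2 \sum_{k \in B_j} E|X_k|^2.
\]
Since $\log^2 k \asymp j^2$ for $k \in B_j$, summing over $j$ gives $\sum_j j^2 \sum_{k \in B_j} E|X_k|^2 \leq C' \sum_k E|X_k|^2 \log^2 k < \infty$ by hypothesis. Hence $\sum_j E[\max_{n \in B_j} |S_n - S_{2^j}|^2] < \infty$, and Chebyshev combined with Borel--Cantelli shows that $\max_{n \in B_j} |S_n - S_{2^j}| \to 0$ almost surely.

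To finish, I would show that the subsequence $(S_{2^j})_{j \geq 0}$ converges almost surely. By orthogonality, $\|S_{2^j} - S_{2^i}\|_{L^2}^2 = \sum_{2^i < k \leq 2^j} E|X_k|^2$, which is summable (the hypothesis in particular forces $\sum_k E|X_k|^2 < \infty$), so $(S_{2^j})$ is Cauchy in $L^2$; a second application of Chebyshev/Borel--Cantelli along a sufficiently fast subsubsequence (or a direct estimate using $\sum_j E|S_{2^{j+1}} - S_{2^j}|^2 \cdot j^2$ together with the hypothesis) yields almost sure convergence of $S_{2^j}$. Combined with the vanishing oscillation on each block, this gives almost sure convergence of $S_n$. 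The main obstacle is the maximal inequality with the $(\log N)^2$ factor; the rest is standard second-moment and Borel--Cantelli bookkeeping.
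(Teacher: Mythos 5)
The paper does not prove this statement itself: it is the classical Rademacher--Menshov theorem, and the text simply quotes it (as Theorem~2.9 of Laurin\v{c}ikas's monograph) without argument. Your proposal supplies the standard proof from scratch, and its architecture is correct and is essentially the only known route: the Rademacher--Menshov maximal inequality with a $(\log_2 N)^2$ loss obtained by binary telescoping and Cauchy--Schwarz, the block-wise bound $E\bigl[\max_{n\in B_j}|S_n-S_{2^j}|^2\bigr]\le C\,j^2\sum_{k\in B_j}E|X_k|^2$, the comparison $j^2\asymp \log^2 k$ for $k\in B_j$ so that the hypothesis makes these block errors summable, and Chebyshev plus Borel--Cantelli to kill the in-block oscillations. (Minor bookkeeping: take $(\log_2(2N))^2$ rather than $(\log_2 N)^2$ in the maximal inequality so that the $j=0$ block is not degenerate; this only costs a constant.)

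One step should be tightened: the almost sure convergence of the subsequence $(S_{2^j})_j$. Your first suggested route --- extract a ``sufficiently fast subsubsequence'' via Chebyshev/Borel--Cantelli from $L^2$-Cauchyness --- only yields almost sure convergence \emph{along that subsubsequence}, which is not enough, since the in-block oscillation control you proved is anchored at \emph{every} dyadic point $2^j$ and cannot be patched across the gaps between subsubsequence points without further work. The parenthetical ``direct estimate'' you mention is the one that actually closes the argument: the increments $S_{2^{j+1}}-S_{2^j}$ are orthogonal, and since $\log^2 k\ge (\log 2)^2 j^2$ on $B_j$, the hypothesis gives $\sum_j j^2\,E|S_{2^{j+1}}-S_{2^j}|^2<\infty$; then Cauchy--Schwarz in $j$, splitting each term as $j^{-1}\cdot j\|S_{2^{j+1}}-S_{2^j}\|_{L^2}$, yields
\[
\sum_j \bigl\|S_{2^{j+1}}-S_{2^j}\bigr\|_{L^2}\le \Bigl(\sum_j j^{-2}\Bigr)^{1/2}\Bigl(\sum_j j^2\,E|S_{2^{j+1}}-S_{2^j}|^2\Bigr)^{1/2}<\infty,
\]
hence $E\bigl[\sum_j |S_{2^{j+1}}-S_{2^j}|\bigr]<\infty$ and the increments are almost surely absolutely summable. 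With that made explicit, the proof is complete.
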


\begin{lemma}
    $\zeta(\sigma,\chi)$ is a complex-valued random variable on the Bohr space. 
\end{lemma}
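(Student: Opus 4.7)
The plan is to realize $\zeta(\sigma,\chi)$ as a sum of orthogonal random variables on the Bohr space and invoke Theorem 3. Concretely, to each positive integer $n$ attach the random variable
\[
X_n(\chi) := n^{-\sigma}\,\hat{T}_{-\log n}(\chi),
\]
so that, formally, $\zeta(\sigma,\chi) = \sum_{n\ge 1} X_n(\chi)$, in accordance with the expansion displayed just before the statement.

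First I would verify that $\{X_n\}_{n\ge 1}$ is an orthogonal family in $L^2(\mathbb{B},d\chi)$. Since the map $n\mapsto -\log n$ is injective on the positive integers, the functions $\hat{T}_{-\log n}$ are distinct elements of the orthonormal basis $\widehat{S}\subset L^2(\mathbb{B})$ discussed in the Preliminaries, so
\[
\langle X_n,X_m\rangle_{L^2(\mathbb{B})} = n^{-\sigma} m^{-\sigma}\,\delta_{n,m},
\qquad E|X_n|^2 = n^{-2\sigma}.
\]
Next I would check the hypothesis of Theorem 3: since $\sigma>\tfrac{1}{2}$ we have $2\sigma>1$, and the elementary estimate $\sum_{n\ge 1} n^{-2\sigma}\log^2 n < \infty$ holds (for instance by comparison with $n^{-(2\sigma+1)/2}$ for large $n$). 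Theorem 3 then applies verbatim and yields almost-sure convergence of $\sum_{n\ge 1} X_n$ on $(\mathbb{B},\mathcal{B},d\chi)$.

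To conclude, I would note that orthogonality together with $\sum_n n^{-2\sigma}<\infty$ also gives convergence in $L^2(\mathbb{B})$: the partial sums $S_N=\sum_{n=1}^N X_n$ satisfy $\|S_M-S_N\|_{L^2(\mathbb{B})}^2 = \sum_{n=N+1}^M n^{-2\sigma}$, which is Cauchy. Hence the limit $\zeta(\sigma,\chi)$ is a well-defined element of $L^2(\mathbb{B})$ and, in particular, a complex-valued random variable on the Bohr space, agreeing almost surely with the pointwise sum produced by Theorem 3.

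The only step that requires any care is matching the abstract hypotheses of Theorem 3 (orthogonality plus the weighted second-moment condition with the $\log^2 k$ factor) with the concrete structure of the Dirichlet series; once the identification of $\hat{T}_{-\log n}$ as orthonormal basis elements of $L^2(\mathbb{B})$ is made, the summability condition is immediate for every $\sigma>\tfrac{1}{2}$, and there is no genuine obstacle. The boundary case $\sigma=\tfrac{1}{2}$ falls outside the scope of this argument because $\sum n^{-1}\log^2 n$ diverges, which is consistent with the fact that $\zeta(\tfrac{1}{2},\chi)$ is not expected to lie in $L^2(\mathbb{B})$.
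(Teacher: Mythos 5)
Your proof is correct and follows essentially the same route as the paper: decompose $\zeta(\sigma,\chi)=\sum_{n\ge 1} n^{-\sigma}\hat{T}_{-\log n}(\chi)$ into orthogonal random variables, compute $E|X_n|^2=n^{-2\sigma}$, verify the $\log^2$-weighted summability for $\sigma>\tfrac12$, and invoke Theorem 3. Your orthogonality justification is in fact cleaner than the paper's (injectivity of $n\mapsto\log n$ on the orthonormal set $\widehat{S}$ suffices, with no need to appeal to prime factorization), and the added remark on $L^2$ convergence is a harmless bonus.
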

\begin{proof}
Define $\varphi_k(\chi)=\frac{\chi(e^{-it\log k})}{k^{\sigma}}$. Then by the prime factorization theorem and the orthogonality of $T_{-\log p}$ for different primes $p$, the variables $\varphi_k$ are orthogonal. On the other hand, we have that $E|\varphi_k|^2=\frac{1}{k^{2\sigma}}$. Hence, by Theorem 3, $\zeta(\sigma,\chi)$ converges almost surely. 
\end{proof}

Recall from equation (4), that a similar random variable was defined on the probability space $\Omega'$ given by $\zeta(\sigma,\omega)=\prod_{p<\infty } \left(1+\sum_{k=1}^{\infty}\frac{\omega(p)^{k}}{p^{k\sigma}}\right)$.  In fact, we will prove that $\zeta(\sigma,\chi)$ is the pull-back of the complex-valued random variable $\zeta(\sigma,\omega)$ on $\Omega'$. For this, let $\gamma=\{\gamma_1,..,\gamma_n\}$ be an algebraically independent set. Let $G_{\gamma}$ denote the subgroup of $(\mathbb{R},+)$ freely generated by the set $\gamma$. We define 
\[\mathbb{R}_{\gamma}:=Hom[G_{\gamma},S^1],\]
as the set of all homomorphisims from $G_{\gamma}$ to the multiplicative group $S1$.  Each $\mathbb{R}_{\gamma}$ is homeomorphic with the topological space $(S^1)^{n}$, since any element $\chi\in \mathbb{R}_{\gamma}$ is uniquely determined by its values on the generators $\gamma_1,\dots \gamma_n$.  Recall that, the Bohr compactification is equivalently defined by $\bohrc{}=Hom[\mathbb{R},S^1]$. This topological space can be identified with the projective limit of compact spaces $\{\mathbb{R}_{\gamma}\}_{\gamma}$, where $\gamma$ runs over all the finite algebraic independent sets of $\mathbb{R}$. Hence, we have continuous projection maps $\rho_{\gamma}:\bohrc{}\rightarrow \mathbb{R}_{\gamma}$, given by $\chi \mapsto \chi |_{G_{\gamma}}$, or equivalently, $\chi \mapsto \{\chi(\gamma_i)\}_{i=1}^{n}\in (S^1)^{n}$. Consider now the push-forward measures  $\rho_{\gamma,*}\mu_{\bohrc{}}$ on $(S^1)^{n}$, then it holds true that these measures coincide with the Haar measure on the compact group $(S^1)^n$ equipped with coordinate-wise multiplication \cite{bohrrelativ}. 
\begin{theorem}
    Let $\sigma>\frac{1}{2}$ and $A$ be a Borel set of $\mathbb{C}$. Then the distribution of $\zeta(\sigma,\chi)$ satisfies
\begin{equation}
    \zeta(\sigma,\chi)_{*} \mu_{\bohrc{}}(A)=\lim_{T\rightarrow \infty}\frac{1}{2T}\mu_0(\{t\in [-T,T]: \zeta(\sigma+it)\in A) \},
\end{equation}
where $\mu_0$ denotes the Lebesgue measure of $\mathbb{R}$.
\end{theorem}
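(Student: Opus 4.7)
The plan is to realize the random variable $\zeta(\sigma,\chi)$ on $\bohrc{}$ as the pullback of the random variable $\zeta(\sigma,\omega)$ on $\Omega'$ along a measure-preserving continuous map $\Pi:\bohrc{}\to\Omega'$. Once this is done, equation (9) together with Theorem 2 immediately identifies the pushforward of $\mu_{\bohrc{}}$ under $\zeta(\sigma,\chi)$ with the Bohr-Jessen limit on the right-hand side.

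First I would define $\Pi:\bohrc{}\to\Omega'=\prod_{p}S^1_p$ by $\Pi(\chi)=\big(\chi(T_{-\log p})\big)_p$. Continuity is immediate from the continuity of each coordinate map $\hat{T}_{-\log p}$ and the universal property of the product topology. To check that $\Pi_*\mu_{\bohrc{}}=\mu_{\Omega'}$, I would use the projective-limit description of $\bohrc{}$ recalled in the paragraph preceding the theorem. For any finite collection of distinct primes $p_1,\dots,p_n$, unique factorization of integers implies $\mathbb{Z}$-linear independence of $\{\log p_1,\dots,\log p_n\}$ in $(\mathbb{R},+)$, so this set freely generates a subgroup $G_{\gamma_n}$ and furnishes a homeomorphism $\mathbb{R}_{\gamma_n}\cong(S^1)^n$. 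The restriction $\rho_{\gamma_n}:\bohrc{}\to\mathbb{R}_{\gamma_n}$ then coincides with the projection of $\Pi$ onto the first $n$ coordinates. By the cited result $(\rho_{\gamma_n})_*\mu_{\bohrc{}}$ equals the Haar measure on $(S^1)^n$, which is exactly the pushforward of $\mu_{\Omega'}$ under the analogous coordinate projection. Since finite cylinders generate the Borel $\sigma$-algebra on $\Omega'$, uniqueness of Haar measure yields $\Pi_*\mu_{\bohrc{}}=\mu_{\Omega'}$.

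Next I would identify $\zeta(\sigma,\chi)$ with $\zeta(\sigma,\Pi(\chi))$ almost surely. Lemma 4 guarantees a.s.\ convergence on $\bohrc{}$ of the series $\sum_{k\geq 1} k^{-\sigma}\chi(T_{-\log k})$. For $k=\prod_p p^{\alpha(p)}$, the relation $T_{-\log k}=\prod_p T_{-\log p}^{\alpha(p)}$ in $\overline{\mathcal{C}}$ yields
\[
\chi(T_{-\log k}) = \prod_p \chi(T_{-\log p})^{\alpha(p)} = \omega(k), \qquad \omega:=\Pi(\chi),
\]
so the Dirichlet-series representation (4) on $\Omega'$ matches term by term with the series defining $\zeta(\sigma,\chi)$. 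Consequently, for any Borel $A\subset\mathbb{C}$,
\[
\zeta(\sigma,\chi)_*\mu_{\bohrc{}}(A) \;=\; \zeta(\sigma,\omega)_*\big(\Pi_*\mu_{\bohrc{}}\big)(A) \;=\; \zeta(\sigma,\omega)_*\mu_{\Omega'}(A),
\]
which by (9) equals the Bohr-Jessen limit $Q(A,\sigma,\zeta)$.

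The main obstacle I anticipate is the interplay of the two a.s.\ convergence sets: one must verify that the exceptional set on $\bohrc{}$ where Lemma 4's series fails to converge maps into the exceptional set on $\Omega'$ where (4) fails, and vice versa. This compatibility is in fact automatic once $\Pi_*\mu_{\bohrc{}}=\mu_{\Omega'}$ is established, but it deserves a careful argument to ensure the term-by-term equality lifts to an equality of $L^2$-limits rather than merely a formal identity. Beyond this measure-theoretic bookkeeping, the argument is a clean transport of the Bagchi-Laurin\v{c}ikas distributional result to the adelic/Bohr framework via the projective-limit structure of $\bohrc{}$.
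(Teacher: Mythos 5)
Your proposal is correct and follows essentially the same route as the paper: construct a continuous map $\bohrc{}\to\Omega'$ that intertwines the two zeta random variables, verify that it pushes Haar measure forward to Haar measure, and then transport the Bagchi--Laurin\v{c}ikas identification (9) together with Theorem 2. The only cosmetic difference is in how measure preservation is checked --- the paper deduces it from translation invariance of the pushforward (by lifting any $b\in\Omega'$ to some $\hat\chi\in\bohrc{}$), whereas you verify it on finite cylinders via the projective-limit fact that $(\rho_\gamma)_*\mu_{\bohrc{}}$ is Haar on $(S^1)^n$ --- and you are somewhat more careful about sign conventions and the compatibility of almost-sure convergence sets, but these are refinements of the same argument.
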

\begin{proof}
Define the map $\rho_{\infty}:\bohrc{}\rightarrow \Omega'$, where the space $\Omega'$ is defined in (4), given by $\chi\mapsto (\chi(e^{it\log p_k}))_{k=1}^{\infty}$ where $p_k$ runs over the primes. Let $A$ be a cylindrical set of $\Omega'$, that is 
\[A=A_{I}\times\prod_{s\notin I} \mathbb{T}_s, \hspace{0.5cm} A_{I}=\prod_{t\in I}A_t\]
where $I$ is a finite set of primes, and $A_t\subset \mathbb{T}$, then the following holds
\[\rho_{\infty}^{-1}(A)=(\pi_I\circ \rho_{\infty})^{-1}(A_I),\]
where, $\pi_I:\Omega'\rightarrow \prod_{t\in I}\mathbb{T}_t$ is the canonical projection.  Note that $\pi_I\circ \rho_{\infty}=\rho_{\gamma}$, where $\gamma=\{\log p : p\in I\}$ is an algebraic independent set of $\mathbb{R}$, that is, $\rho_{\gamma}$ is the continuous projection from $\bohrc{}$ to $\mathbb{R}_{\gamma}$. Therefore $\rho_{\infty}^{-1}(A)$ is an open set. Since cylindrical sets form a basis for the topology of $\Omega'$, the map $\rho_{\infty}$ is continuous. Consider now an element $b\in \Omega'$, and an element $\hat{\chi}\in \bohrc{}$ such that $\hat{\chi}(e^{it\log p_k})=\overline{b_k}$. Then we have the following
\[\hat{\chi}\rho_{\infty}^{-1}(bA)=\rho_{\infty}^{-1}(A),\]
therefore,
\begin{equation}
    \rho_{\infty,*}\mu_{\bohrc{}}(bA)=\mu_{\bohrc{}}(\hat{\chi}\rho_{\infty}^{-1}(bA))=\rho_{\infty,*}\mu_{\bohrc{}}(A).
\end{equation}
Furthermore, we have that this is a normalized measure, therefore the measure $\rho_{\infty,*}\mu_{\bohrc{}}$ is the normalized Haar measure $\mu_{\Omega'}$ on $\Omega'$. Hence, for a random variable $X:\Omega'\rightarrow \mathbb{C}$, we can construct the pull back $X^*:\bohrc{}\rightarrow \mathbb{C}$, as $X\circ \rho_{\infty}$. It is easy to see that the distributions coincide in virtue of (11). Since $\chi \mapsto \zeta(\sigma,\chi)$, $\chi \in\bohrc{}$, is the pull back of the random variable $\omega \mapsto \zeta(\sigma,\omega)=\sum_{k=1}^{\infty}\omega(k)k^{-\sigma},$ therefore by Theorem 2 and equation (9), the theorem is proved. 
\end{proof}
The function $\bohrc{}\ni \chi \mapsto \zeta(\sigma,\chi)$ for $\frac{1}{2}<\sigma<1$ can be consider as an off-diagonal version of the Riemann zeta function, since we have that $\zeta(\sigma,\chi)$ can be identified by: 
\[Hom_{cont}(\mathbb{A}_f^*/\hat{\mathbb{Z}},S^1)=\prod_{p<\infty} i\mathbb{R}/\frac{2\pi i}{\log p}\mathbb{Z} \ni \chi \mapsto \prod_{p<\infty} \zeta_p(\sigma,\chi), \]
where \[\zeta_p(\sigma,\chi)=\int_{\mathbb{Q}_p^*}\phi_p(x)|x|_p^{\sigma}\chi_p(x)d^{*}\mu_p(x),\]
and $\chi_p\in i\mathbb{R}/\frac{2\pi i}{\log p}\mathbb{Z}$. \newline \\
Then limit (10) relates the off-diagonal zeta function with the Riemann zeta function. The behavior of $\lim_{\sigma \rightarrow \frac{1}{2}}\zeta(\sigma,\chi)$ is still an open problem \cite{takanobu}.  Nevertheless, the extension of the logarithmic derivative of $\zeta(s)$ to $\bohrc{}$ in the critical line has indeed a well-defined relation to the zeta function, as we will show in the next sections. 
\subsection{Probabilistic interpretation of the local formula and spectral measures}
The logarithmic derivative of the zeta function for $Re(s)>1$
is given by
\[-\frac{\zeta'(s)}{\zeta(s)}=\sum_{n=1}^{\infty} \frac{\Lambda(n)}{n^s}=\sum_{p<\infty} \log p \sum_{k=1}^{\infty} p^{-ks}, \]
for fixed $\sigma=Re(s)$ we can extend this function to an element of $C(\bohrc{})$. For $Re(s)>0$,  we can construct the $p$-adic family of random variables given by the extension of the functions
\[\frac{d}{ds}\log \zeta_p(s)=-\log p \sum_{k=1}^{\infty}p^{-ks}.\]
to $C(\bohrc{})$, these extensions are given by the following $p$-adic Mellin transforms  
\[\mathcal{M}_p(\Lambda(|x|_p^{-1}))(\sigma,\chi) =\log p \sum_{k=1}^{\infty} p^{-k\sigma}\hat{T}_{\log p^{-k}}(\chi)=\sum_{k=1}^{\infty}\frac{\Lambda(p^k)}{p^{k\sigma}}\hat{T}_{\log p^{-k}}(\chi).\]
Note that 
\[E[\mathcal{M}_p(\Lambda(|x|_p^{-1})(\sigma,\cdot)]=0,\]
hence for any $p$-adic random variable $\mathcal{M}_p(f)(\sigma,\cdot)$ we have,
\begin{equation}
    Cov[M_p(f)(\sigma,\cdot),\mathcal{M}_p(\Lambda(|x|_p^{-1}))(\sigma,\cdot)]=\log p \sum_{k=1}^{\infty}f(p^{-k})p^{-2k\sigma},
\end{equation}
and 
\begin{equation}
    Cov[\mathcal{M}_p(f)(\sigma,\cdot),\mathcal{M}_p(\Lambda^*(|x|_p^{-1}))(\sigma,\cdot)]=\log p \sum_{k=1}^{\infty} f(p^{k}) p^{-k(2\sigma-1)}.
\end{equation}
Consider now (formally) the off-diagonal logarithmic derivative given by 
\[\chi\mapsto\sum_{p<\infty} \mathcal{M}_p(\sigma,\Lambda(|x|_p^{-1}))(\chi), \]
In a similar way we extend this function to a random variable for $\frac{1}{2}<\sigma<1$. For this we need the following result, the proof can be found in \cite[Section 17.3]{Loebeprob}. 
\begin{proposition}
    Let $\{X_n\}_{n=1}^{\infty}$ be a sequence of real-valued random varibles defined on a probability space $(\Omega,\mathcal{F},P)$. Suppose that 
    \begin{itemize}
        \item $\{X_n:n=1,2,...\}$ are independent,
        \item $X_n$ is square-integrable, that is, $E[X_n^2]<\infty$ $(\forall n)$,
        \item $\sum_{n=1}^{\infty}Var(X_n)<\infty$, where $Var(X_n)=E[(X_n-E[X_n])^2]$. 
    \end{itemize}
    Then $\sum_{n=1}^{\infty}(X_n-E[X_n])$ is convergent $P$-almost everywhere; that is, $\sum_{n=1}^{N}(X_n-E[X_n])$ is convergent as $N\rightarrow \infty$ $P$-almost everywhere.  
\end{proposition}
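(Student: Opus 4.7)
The plan is to recognize this as the classical Khintchine–Kolmogorov convergence criterion, and to prove it in the standard two-step manner: first establish convergence in $L^2$ using orthogonality of the centered increments, then upgrade this to almost sure convergence via Kolmogorov's maximal inequality, which is precisely where the independence hypothesis is used.

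First I would normalize by setting $Y_n := X_n - E[X_n]$, so that the $Y_n$ are independent, $E[Y_n]=0$, and $\sum_{n=1}^{\infty} E[Y_n^2] = \sum_{n=1}^{\infty}\mathrm{Var}(X_n) < \infty$. Writing $S_N := \sum_{n=1}^{N} Y_n$, I want to show $(S_N)$ converges $P$-a.s. As a warm-up, observe that since the $Y_n$ are orthogonal in $L^2(\Omega,\mathcal{F},P)$,
\begin{equation*}
E\bigl[(S_N - S_M)^2\bigr] \;=\; \sum_{n=M+1}^{N} E[Y_n^2] \;\longrightarrow\; 0 \quad \text{as } M,N\to\infty,
\end{equation*}
so $(S_N)$ is Cauchy in $L^2$ and hence converges in $L^2$ to some limit $S$. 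This already shows convergence in probability, but not the required a.s. statement.

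The main step, and the genuinely nontrivial one, is to pass from $L^2$-convergence to a.s.\ convergence. For this I would invoke Kolmogorov's maximal inequality: for independent mean-zero square-integrable variables $Y_{M+1},\dots,Y_N$ and any $\varepsilon>0$,
\begin{equation*}
P\!\left(\max_{M < k \leq N} |S_k - S_M| \geq \varepsilon\right) \;\leq\; \frac{1}{\varepsilon^2}\sum_{n=M+1}^{N} E[Y_n^2].
\end{equation*}
Letting $N\to\infty$ (using continuity of probability from below on the nested events) gives
\begin{equation*}
P\!\left(\sup_{k > M} |S_k - S_M| \geq \varepsilon\right) \;\leq\; \frac{1}{\varepsilon^2}\sum_{n=M+1}^{\infty} E[Y_n^2],
\end{equation*}
and the right-hand side tends to $0$ as $M\to\infty$ by the summability hypothesis. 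Taking $\varepsilon = 1/j$ and applying Borel--Cantelli along a sufficiently fast subsequence $M_j$, the event $\{\sup_{k>M_j}|S_k-S_{M_j}| \geq 1/j\}$ occurs only finitely often $P$-a.s.; consequently $(S_N(\omega))$ is a Cauchy sequence in $\mathbb{R}$ for $P$-almost every $\omega$, and therefore converges.

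The main obstacle is the maximal inequality itself, which is the one place where full independence (not merely orthogonality) is essential: the proof uses the stopping-time argument that the first index $k$ at which $|S_k|$ exceeds $\varepsilon$ is independent of the future increments $S_N - S_k$, so that $E[S_N^2] \geq E[S_N^2; \tau \leq N] \geq E[S_\tau^2; \tau \leq N] \geq \varepsilon^2 P(\tau \leq N)$. Once this inequality is in hand, the remaining deductions above are essentially automatic, and the proposition follows. As noted by the author, a complete exposition can be found in \cite{Loebeprob}.
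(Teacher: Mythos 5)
The paper supplies no proof of this proposition; it appears as a citation to Loève's textbook (\cite{Loebeprob}, Section 17.3), where it is the classical Khintchine--Kolmogorov one-series theorem. Your argument is correct and is exactly the standard proof found there and in most graduate probability texts: center to $Y_n = X_n - E[X_n]$, observe $L^2$-Cauchiness from orthogonality, apply Kolmogorov's maximal inequality to the tail blocks $S_k - S_M$, let $N \to \infty$ by continuity from below, and then run Borel--Cantelli along a subsequence $M_j$ chosen to make $\sum_j j^2 \sum_{n > M_j} E[Y_n^2]$ finite so that the partial sums are $P$-a.s.\ Cauchy. You also correctly identify the one place where full independence, rather than mere orthogonality, is indispensable, namely the stopping-time argument underlying the maximal inequality. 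Nothing is missing; you are in agreement with the cited source.
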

\begin{proposition}
    For $\sigma>\frac{1}{2}$ the series  $\chi\mapsto\sum_{p<\infty} \mathcal{M}_p(\Lambda(|x|_p^{-1}))(\sigma,\chi),$ is convergent almost everywhere on $\bohrc{}$. 
\end{proposition}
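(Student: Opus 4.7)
The plan is to apply Proposition 2 to the real and imaginary parts of the random variables $X_p(\chi):=\mathcal{M}_p(\Lambda(|x|_p^{-1}))(\sigma,\chi)$, $p$ prime, viewed as elements of $L^2(\mathbb{B})$. The three hypotheses to verify are independence, square integrability, and summable variance.

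First I would establish that the family $\{X_p\}_{p<\infty}$ is independent. By definition $X_p=\log p\sum_{k\geq 1}p^{-k\sigma}\hat{T}_{-k\log p}$, so $X_p$ is a measurable function of the coordinates $\{\chi(\log p)\}$, i.e.\ of the image $\rho_{\{\log p\}}(\chi)\in\mathbb{R}_{\{\log p\}}\cong S^1$. Since the set $\{\log p:p\text{ prime}\}$ is $\mathbb{Q}$-linearly (in fact algebraically) independent in $\mathbb{R}$, the discussion preceding Theorem 4 shows that for any finite set $I$ of primes the pushforward $\rho_{\{\log p:p\in I\},*}\mu_{\mathbb{B}}$ is the product Haar measure on $(S^1)^{|I|}$. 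Hence the $X_p$ depend on disjoint blocks of independent coordinates, so $\{X_p\}_{p<\infty}$ (and therefore their real and imaginary parts) form independent families.

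Second, square integrability and the mean: since $\{\hat{T}_{-k\log p}\}_{k\geq 1}$ is an orthonormal system orthogonal to $\hat{T}_0=\mathbf{1}$, we have $E[X_p]=\langle X_p,\hat{T}_0\rangle=0$, and
\[
\mathrm{Var}(X_p)=E[|X_p|^2]=\|X_p\|^2_{L^2(\mathbb{B})}=(\log p)^2\sum_{k=1}^{\infty}p^{-2k\sigma}=(\log p)^2\,\frac{p^{-2\sigma}}{1-p^{-2\sigma}}<\infty,
\]
by Parseval on $L^2(\mathbb{B})$. Third, for the summability: for $\sigma>\tfrac12$ we have $2\sigma>1$, and choosing any $\varepsilon\in(0,2\sigma-1)$ we use $(\log p)^2\leq C_\varepsilon p^\varepsilon$ together with $\tfrac{p^{-2\sigma}}{1-p^{-2\sigma}}\leq 2p^{-2\sigma}$ for $p\geq 2$, so
\[
\sum_{p<\infty}\mathrm{Var}(X_p)\leq 2C_\varepsilon\sum_{p<\infty}p^{-2\sigma+\varepsilon}\leq 2C_\varepsilon\sum_{n\geq 2}n^{-2\sigma+\varepsilon}<\infty.
\]

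Finally, apply Proposition 2 separately to the independent square-integrable real random variables $\mathrm{Re}\,X_p$ and $\mathrm{Im}\,X_p$, noting that their variances are bounded by $\mathrm{Var}(X_p)$ and hence still summable. Since all $E[X_p]=0$, Proposition 2 yields almost sure convergence of $\sum_{p<\infty}\mathrm{Re}\,X_p$ and $\sum_{p<\infty}\mathrm{Im}\,X_p$, and therefore of $\sum_{p<\infty}X_p$, on $\mathbb{B}$. The main subtlety is the independence step, which rests on the $\mathbb{Q}$-linear independence of $\{\log p\}$ and the product structure of the pushforward Haar measure along the projections $\rho_\gamma$; once this is in place, everything else reduces to the Parseval computation and a crude prime estimate.
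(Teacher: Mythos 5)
Your proof follows essentially the same route as the paper's: compute $\mathrm{Var}(X_p)=(\log p)^2\,p^{-2\sigma}/(1-p^{-2\sigma})$, verify summability for $\sigma>\tfrac12$, obtain stochastic independence from the rational independence of $\{\log p\}$ (the paper's Proposition 5), and invoke the Kolmogorov one-series criterion, which is the paper's Proposition 3 (your citation to ``Proposition 2'' is a numbering slip, but the content you invoke is the right one). You are a bit more careful than the paper at two small points---absorbing the $(\log p)^2$ factor by a $p^\varepsilon$ bound, and splitting into real and imaginary parts so that a theorem stated for real-valued random variables genuinely applies---but these are fillings-in of the same argument, not a different approach.
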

\begin{proof}
    The function $\mathcal{M}_p(\Lambda(|x|_p^{-1}))(\sigma,\chi)$ is clearly an element of $L^2(\bohrc{})$. It holds true that 
\[\sum_{n=1}^{\infty}Var(\mathcal{M}_p(\Lambda(|x|_p^{-1}))(\sigma,\chi))=\sum_{p<\infty}(\log p)^2 \frac{p^{-2\sigma}}{1-p^{-2\sigma}} \]
since 
\[\frac{p^{-2\sigma}}{1-p^{-2\sigma}}\leq \frac{2^{2\sigma}}{2^{2\sigma}-1}\frac{1}{p^{2\sigma}},\]
the sum of variances converges for $\sigma>\frac{1}{2}$. \newline \\
The random variables $\{\mathcal{M}_p(\Lambda(|x|_p^{-1}))(\sigma,\chi)\}_{p<\infty}$ are independent by the proposition below. Therefore by Proposition 3 the result is proved. 
\end{proof}
\begin{definition}
    The real numbers $\lambda_1,...,\lambda_k$ are called rationally independent if they are linearly independent over $\mathbb{Z}$, i.e., for all $n_1,...,n_k\in \mathbb{Z}$ ,
\[\sum_{i=1}^{k} n_i \lambda_i =0 \implies n_1=\cdots n_k=0.\]
\end{definition}
The proof of the following Proposition can be found in \cite{kacstatistical}.
\begin{proposition}
    Let $\lambda_n$, $n \in \mathbb{N}$ be a sequence of rationally independent real numbers. Then the functions $\hat{T}_{\lambda_n}$ are stochastically independent under the Haar measure of $\bohrc{}$. 
\end{proposition}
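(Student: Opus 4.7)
The plan is to verify stochastic independence of the characters $\hat{T}_{\lambda_n}$ by reducing to the finite-dimensional case and invoking the explicit description of the pushforward of the Haar measure of $\bohrc{}$ already recalled earlier in Section~3.3. Since independence of a countable family of random variables is equivalent to independence of every finite subcollection, it suffices to fix $n$ and show that $\hat{T}_{\lambda_1},\ldots,\hat{T}_{\lambda_n}$ are independent $S^1$-valued random variables on $(\bohrc{},d\chi)$.

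First I would set $\gamma=\{\lambda_1,\ldots,\lambda_n\}$ and note that, because the $\lambda_i$ are rationally independent, the subgroup $G_\gamma=\mathbb{Z}\lambda_1+\cdots+\mathbb{Z}\lambda_n\subset\mathbb{R}$ is free abelian of rank $n$. Hence $\mathbb{R}_\gamma=\mathrm{Hom}[G_\gamma,S^1]\cong(S^1)^n$ via the evaluation map at the basis, and the continuous projection
\[
\rho_\gamma:\bohrc{}\longrightarrow\mathbb{R}_\gamma,\qquad \chi\longmapsto\bigl(\chi(T_{\lambda_1}),\ldots,\chi(T_{\lambda_n})\bigr)=\bigl(\hat{T}_{\lambda_1}(\chi),\ldots,\hat{T}_{\lambda_n}(\chi)\bigr),
\]
expresses the joint random variable $(\hat{T}_{\lambda_1},\ldots,\hat{T}_{\lambda_n})$ as $\rho_\gamma$ itself. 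Consequently, the joint law of this vector under $d\chi$ is the pushforward $\rho_{\gamma,*}d\chi$ on $(S^1)^n$. The fact that $\rho_{\gamma,*}d\chi$ coincides with the normalized Haar measure on the compact group $(S^1)^n$ has already been stated in the discussion preceding Theorem~4. Since the Haar measure on a finite product of compact groups is the product of the individual Haar measures, the joint distribution factors as the product of the marginal laws of each $\hat{T}_{\lambda_i}$, which is exactly stochastic independence.

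If one prefers a self-contained Fourier argument, the same conclusion is immediate from computing joint moments. For any integers $k_1,\ldots,k_n$,
\[
E\!\left[\hat{T}_{\lambda_1}^{k_1}\cdots\hat{T}_{\lambda_n}^{k_n}\right]=E\!\left[\hat{T}_{k_1\lambda_1+\cdots+k_n\lambda_n}\right]=\int_{\bohrc{}}\hat{T}_{k_1\lambda_1+\cdots+k_n\lambda_n}(\chi)\,d\chi=\delta_{k_1\lambda_1+\cdots+k_n\lambda_n,\,0}.
\]
Rational independence forces the Kronecker delta to equal $\prod_{i=1}^n\delta_{k_i,0}=\prod_{i=1}^n E[\hat{T}_{\lambda_i}^{k_i}]$. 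Because the trigonometric monomials $\{z_1^{k_1}\cdots z_n^{k_n}\}$ separate points of $(S^1)^n$ and span a dense $*$-subalgebra of $C((S^1)^n)$ by Stone--Weierstrass, the Riesz representation theorem forces the joint distribution to be the product of the marginals.

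The main obstacle, and the only point that is not formally trivial, is the identification $\rho_{\gamma,*}d\chi=\text{Haar}_{(S^1)^n}$; both presentations above rest on it. In the second form this identification is effectively re-derived from the orthonormality of $\{\hat{T}_k\}_{k\in\mathbb{R}}$ in $L^2(\bohrc{})$, so the argument is complete once rational independence is used to translate $\delta_{k_1\lambda_1+\cdots+k_n\lambda_n,0}$ into the product $\prod_i\delta_{k_i,0}$.
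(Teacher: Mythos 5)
Your proof is correct. The paper itself supplies no argument for this proposition --- it simply points to \cite{kacstatistical} --- so there is no literal ``paper proof'' to compare against, but your Fourier-moment version is essentially the classical argument in Kac: reduce independence of a countable family to independence of every finite subfamily; compute the joint moments $E\bigl[\hat{T}_{\lambda_1}^{k_1}\cdots\hat{T}_{\lambda_n}^{k_n}\bigr]=E\bigl[\hat{T}_{\sum_i k_i\lambda_i}\bigr]=\delta_{\sum_i k_i\lambda_i,\,0}$ using $\hat{T}_a\hat{T}_b=\hat{T}_{a+b}$ and $E[\hat{T}_k]=\delta_{k,0}$; invoke rational independence to factor the delta; and conclude by Stone--Weierstrass and Riesz that the joint law on $(S^1)^n$ is the product of the marginals. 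Your first version, which routes through the identification $\rho_{\gamma,*}\,d\chi=\mathrm{Haar}_{(S^1)^n}$, is legitimate within the paper because that pushforward fact is asserted (with a reference to \cite{bohrrelativ}) in the paragraph preceding Theorem~4; but as you correctly observe, that identification is itself established by the same orthonormality computation, so the two presentations are the same argument in different clothing. The only detail worth flagging is that you implicitly use that each $\lambda_i\neq 0$ (so that $E[\hat{T}_{k_i\lambda_i}]=\delta_{k_i,0}$), which is indeed forced by rational independence of the set; it would be worth saying that in one clause.
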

Therefore, the sequence of elements $\hat{T}_{\log p}$ are stochastically independent under the Haar measure of $\bohrc{}$. \newline

We denote by $\frac{\zeta'}{\zeta}(\sigma,\chi)$ the corresponding extension of the logarithmic derivative given in Proposition 4. Now we study the covariance map attached to this random element. That is, for a random variable $X(\chi)$ we are interested in the map 
\[X\mapsto Cov[X(\chi),Re\logz{}(\sigma,\chi)].\]
Since $E[\logz{}(\sigma,\chi)]=0$, by the Riesz representation theorem, this is just the linear functional represented by $Re\logz{}$. Moreover, under certain restrictions, this covariance will be equal to a quadratic form attached to a bounded linear operator, or equivalently a spectral integral, which will be related to the Weil explicit sums. To begin with this, we start with the following result: We denote by $\mathcal{B}(L^2(\bohrc{}))$ the set of bounded linear operators on the Hilbert space $L^2(\bohrc{})$. 
\begin{theorem}
There exists a bounded linear operator  $\mathfrak{W}_p$ on $L^2(\bohrc{})$ such that its attached quadratic form  satisfies 

\begin{equation}
    \begin{split}
          \langle\mathfrak{W}_p \mathcal{M}_p(f)(1/2,\cdot),\mathcal{M}_p(f)(1/2,\cdot)\rangle &=Cov[\mathcal{M}_p(f*f^*)(1/2,\cdot),\mathfrak{W}_p \mathbf{1}]\\&=W_p(f*f^*),
    \end{split}
\end{equation}
and 
\begin{equation}
    E[\mathfrak{W}_p \mathcal{M}_p(f)(1/2,\cdot)]=W_p(f)
\end{equation}
\end{theorem}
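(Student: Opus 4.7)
The plan is to take $\mathfrak{W}_p$ to be the multiplication operator on $L^2(\bohrc{})$ associated with the extension of $2\,Re\,\zeta_p'/\zeta_p$ to the critical line. Concretely, I would set
$$h_p(\chi) := \mathcal{M}_p(\Lambda(|x|_p^{-1}))(1/2,\chi) + \mathcal{M}_p(\Lambda^*(|x|_p^{-1}))(1/2,\chi)$$
and define $\mathfrak{W}_p F := h_p\cdot F$. The conjugation identity $\mathcal{M}_p(\overline{f}^*)(1/2,\chi) = \overline{\mathcal{M}_p(f)(1/2,\chi)}$ stated above Proposition $1$, combined with the fact that $\Lambda$ is real, gives $h_p = 2\,Re\,\mathcal{M}_p(\Lambda(|x|_p^{-1}))(1/2,\cdot)$. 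Applying the Weierstrass $M$-test to $\log p \sum_{k\ge 1} p^{-k/2} \hat{T}_{-k\log p}$ shows $h_p \in C(\bohrc{})$ with $\|h_p\|_\infty \le 2(\log p)/(p^{1/2}-1)$, so $\mathfrak{W}_p$ is a bounded self-adjoint operator. Finally, $h_p$ has no constant Fourier coefficient in the basis $\{\hat{T}_{k\log p}\}_{k\in\mathbb{Z}}$, so $\Lambda(h_p) = E[h_p] = 0$.

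For the second identity $(15)$, I would compute
$$E[\mathfrak{W}_p\mathcal{M}_p(f)(1/2,\cdot)] = \int_{\bohrc{}} h_p(\chi)\,\mathcal{M}_p(f)(1/2,\chi)\,d\chi$$
by expanding both factors in the orthonormal system $\{\hat{T}_{k\log p}\}$ and extracting the constant Fourier coefficient of the product. The surviving pairings give $\log p \sum_{k>0} f(p^{-k})p^{-k} + \log p\sum_{k<0} f(p^{-k})$, which equals $W_p(f) = \log p \sum_{j\neq 0} f(p^j)\min(p^j,1)$. Equivalently, since $E[h_p]=0$ the integral coincides with $Cov[\mathcal{M}_p(f)(1/2,\cdot),h_p]$, and by bilinearity of covariance together with $(12)$--$(13)$ one obtains exactly the same two sums.

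For identity $(14)$, the convolution theorem (Proposition $2$) at $s=1/2$ combined with the conjugation identity gives $\mathcal{M}_p(f*\overline{f}^*)(1/2,\chi) = |\mathcal{M}_p(f)(1/2,\chi)|^2$. Since $h_p$ is real,
$$\langle \mathfrak{W}_p\mathcal{M}_p(f)(1/2,\cdot),\mathcal{M}_p(f)(1/2,\cdot)\rangle = \int_{\bohrc{}} h_p\,\mathcal{M}_p(f*\overline{f}^*)(1/2,\chi)\,d\chi,$$
and because $\Lambda(h_p) = 0$, the covariance formula $Cov[M,N]=\langle M,N\rangle - \Lambda(M)\overline{\Lambda(N)}$ identifies this last integral with $Cov[\mathcal{M}_p(f*\overline{f}^*)(1/2,\cdot),\mathfrak{W}_p\mathbf{1}]$. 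Applying the preceding paragraph to $g := f*\overline{f}^*$ in place of $f$ then produces $W_p(f*\overline{f}^*)$, closing the chain of equalities.

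The main obstacle is convergence bookkeeping: the termwise extraction of the constant Fourier coefficient must be justified, which is clean for $f\in C_c^\infty(\mathbb{R}_+^*)$ because its restriction to $p^{\mathbb{Z}}$ has finite support while the coefficients of $h_p$ decay as $p^{-|k|/2}$. A secondary cosmetic point is that the theorem statement writes $f*f^*$ whereas the argument naturally produces $f*\overline{f}^*$; the two agree for real-valued $f$, which is the relevant case for Weil's positivity criterion.
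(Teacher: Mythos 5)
Your proposal matches the paper's proof: both define $\mathfrak{W}_p$ as the multiplication operator on $L^2(\bohrc{})$ by $\mathcal{M}_p(\Lambda+\Lambda^*)(1/2,\cdot)$ inside the faithful representation of $C(\bohrc{})$, and both derive $(14)$ and $(15)$ from the covariance identities $(12)$--$(13)$ specialized at $\sigma=1/2$. You supply the boundedness/self-adjointness bookkeeping and the Fourier-coefficient extraction that the paper leaves implicit, and you correctly flag the likely typo $f*f^*$ versus $f*\overline{f}^*$, but the underlying argument is the same.
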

\begin{proof}
    The Hilbert space $L^2(\bohrc{})$ provides a faithful representation of the $C^*$-algebra $C(\bohrc{})$. In particular, for $T_k\in C(\bohrc{})$  its representative is given by 
\[\Pi(k)\psi(\chi)=\hat{T}_k(\chi)\psi(\chi), \ \psi\in L^2(\bohrc{}),\]
Consider the representative of $\mathcal{M}_p(\Lambda+\Lambda^*)(\frac{1}{2},\cdot)$, denoted by $\mathfrak{W}_p\in \mathcal{B}(\bohrc{})$, then the equations $(14)$ and $(15)$ are satisfied in virtue of $(12)$ and $(13)$. 
\end{proof}
A similar construction can be done for $\sigma>0$, generating a bounded operator $\mathfrak{W}_p^{\sigma}$.  For a function $f\in C_{c}^{\infty}(\mathbb{R}_{+})$ we define 
\[\mathcal{M}_{fin}(f)(\sigma,\chi):=\sum_{p<\infty}\mathcal{M}_p(f)(\sigma,\chi),\]
note that, since the support of $f$ is a compact set contained in $(0,\infty)$, this sum is finite and every $p$-adic Mellin transform is well defined. 
\begin{proposition}
    Let $f\in C_c^{\infty}(\mathbb{R}_{+})$, then  
\begin{equation*}
    \begin{split}
                &Cov[\mathcal{M}_{fin}(f)(\sigma,\cdot),2Re\logz{}(\sigma,\cdot)]=\\
              &\sum_{p<\infty}\left(\log p \sum_{k=1}^{\infty}f(p^{-k})p^{-2k\sigma}+\log p \sum_{k=1}^{\infty} f(p^{k}) p^{-k(2\sigma-1)}\right),
    \end{split}
\end{equation*}
and 
\[\lim_{\sigma\rightarrow\frac{1}{2}}Cov[\mathcal{M}_{fin}(f)(\sigma,\cdot),2Re\logz{}(\sigma,\cdot)]=W_{fin}(f).\]
\end{proposition}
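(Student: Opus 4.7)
The plan is to decompose both arguments of the covariance as sums over primes and exploit the near-orthogonality of the Fourier modes $\hat{T}_{\log p^k}$ across distinct primes. First I would write $\mathcal{M}_{fin}(f)(\sigma,\cdot)=\sum_{q<\infty}\mathcal{M}_q(f)(\sigma,\cdot)$, which is in fact a finite sum because $f\in C_c^{\infty}(\mathbb{R}_+^*)$ has support in some $[a,b]\subset(0,\infty)$ and therefore vanishes at $q^{\pm k}$ for all but finitely many $(q,k)$. Likewise, using the identification built into the construction of $\mathfrak{W}_p$ preceding Theorem 5, I would write $2Re\logz{}(\sigma,\cdot)=\sum_{p<\infty}\bigl[\mathcal{M}_p(\Lambda(|x|_p^{-1}))+\mathcal{M}_p(\Lambda^*(|x|_p^{-1}))\bigr](\sigma,\cdot)$, where the series converges in $L^2(\bohrc{})$ for $\sigma>\tfrac{1}{2}$ by the variance estimate already invoked in the proof of Proposition 4. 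By sesquilinearity and $L^2$-continuity of the covariance, the object to evaluate then becomes a double sum indexed by pairs $(p,q)$.

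Next I would argue that all off-diagonal terms with $p\ne q$ vanish. The random variable $\mathcal{M}_q(f)(\sigma,\cdot)$ lies in the closed span of $\{\hat{T}_{\log q^{-k}}:k\in\mathbb{Z}\}$, whereas $\mathcal{M}_p(\Lambda(|x|_p^{-1}))(\sigma,\cdot)$ and $\mathcal{M}_p(\Lambda^*(|x|_p^{-1}))(\sigma,\cdot)$ lie in the span of $\{\hat{T}_{\log p^{\pm l}}:l\ge 1\}$. By unique prime factorization, the relation $\log q^{-k}=\log p^{\pm l}$ forces $k=l=0$, which is excluded from the indexing of the $\Lambda$-terms. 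Since $\{\hat{T}_a\}_{a\in\mathbb{R}}$ is an orthonormal system in $L^2(\bohrc{})$ and $E[\mathcal{M}_p(\Lambda)]=E[\mathcal{M}_p(\Lambda^*)]=0$, every $p\ne q$ covariance drops out.

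The diagonal $p=q$ contributions are governed directly by formulas $(12)$ and $(13)$, yielding for each $p$ the summand $\log p\sum_{k\ge 1}f(p^{-k})p^{-2k\sigma}+\log p\sum_{k\ge 1}f(p^k)p^{-k(2\sigma-1)}$. Summing over the finitely many primes for which this is nonzero produces the first identity. For the limit, compact support of $f$ again ensures that only finitely many pairs $(p,k)$ contribute, so one may pass $\sigma\to\tfrac{1}{2}^+$ termwise through both sums; at $\sigma=\tfrac{1}{2}$ the two inner sums collapse to $\log p\bigl(\sum_{k\ge 1}f(p^{-k})p^{-k}+\sum_{k\ge 1}f(p^k)\bigr)$, which matches $\log p\sum_{k\ne 0}f(p^k)\min(p^k,1)=W_p(f)$, and summing over $p$ yields $W_{fin}(f)$. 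The step I expect to require the most care is the interchange of covariance with the $L^2$-convergent infinite sum defining $2Re\logz{}(\sigma,\cdot)$ in the first paragraph; this is handled by the $L^2$-continuity of the covariance via Cauchy--Schwarz together with the variance bound $\sum_p(\log p)^2 p^{-2\sigma}(1-p^{-2\sigma})^{-1}<\infty$ already established in Proposition 4.
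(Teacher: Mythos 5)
Your proof follows essentially the same route as the paper's: both reduce the covariance to an inner product using $E[\logz{}]=E[\mathcal{M}_p(\Lambda)]=E[\mathcal{M}_p(\Lambda^*)]=0$, exploit orthogonality across distinct primes (the paper simply cites ``the orthogonality property of the Mellin transform for different primes,'' while you spell it out via unique factorization and the orthonormality of $\{\hat{T}_a\}$), invoke equations (12) and (13) for the diagonal terms, and use compact support of $f$ to make the sum finite and pass the limit $\sigma\to\tfrac12$ termwise. The proposal is correct and is essentially the paper's argument with the implicit steps made explicit.
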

\begin{proof}
    Since $E[\logz{}]=0$, we have that 
\[Cov[\mathcal{M}_{fin}(f)(\sigma,\cdot),2Re\logz{}(\sigma,\cdot)]=\langle\mathcal{M}_{fin}(f)(\sigma,\cdot),2Re\logz{}(\sigma,\cdot)\rangle,\]
by the orthogonality property of the Mellin transform for different prime numbers $p$, the following holds true
\[Cov[\mathcal{M}_{fin}(f)(\sigma,\cdot),2Re\logz{}(\sigma,\cdot)]=\sum_{p<\infty} \langle\mathcal{M}_{p}(f)(\sigma,\chi),\mathcal{M}_p(\Lambda+\Lambda^*)(|x|_p^{-1})(\sigma,\chi)\rangle\]the result follows from (12) and (13). The above sum is finite since the support of $f$ is a compact set, the limit follows immediately. 
\end{proof}
\begin{example}
 Let $X>1$. And consider $g(x)=1_{(1,X)}(x)$ and $g(1)=g(X)=\frac{1}{2}$. Then 
\[Cov[\mathcal{M}_{fin}(g)(\sigma,\cdot),2Re \ \logz{}(\sigma,\cdot)]=\sum_{1<n<X} \Lambda(n) + \frac{1}{2} \Lambda(X)=\psi(x)+\frac{1}{2} \Lambda(X)\]
\end{example}
As mentioned in section 1.2, it is enough to consider a finite set of primes in order to analyze the Weil explicit sum. This will allow us to attach a quadratic form and a spectral integral to the term $W_{fin}(f*\overline{f}^*)$. The archimedean contribution will be analyzed in the next section. Denote by $\mathbb{P}$ the set of primes. As mentioned before, every $f\in C_c^{\infty}(\mathbb{R}^+)$ has attached a finite set of primes $I\subset \mathbb{P}$ such that for $p\in \mathbb{P}\setminus I$, $f(p^k)=0$ for all $k\in \mathbb{Z}$. Consider then the class of functions with the same set $I$ denoted by $C_I$, therefore
\[f\in C_I: \mathcal{M}_{fin}(f)(\sigma,\chi)=\sum_{p\in I}\mathcal{M}_p(f)(\sigma,\chi).\]\begin{theorem}
For every finite set $I\subset \mathbb{P}$ there exists a bounded linear operator  $\mathfrak{W}_I$ on $L^2(\bohrc{})$ such that for any $f\in C_I$ its attached quadratic form  satisfies
\begin{equation*}
    \begin{split}
        \langle\mathfrak{W}_I \mathcal{M}_I(f)(1/2,\cdot),\mathcal{M}_I(f)(1/2,\cdot)\rangle &=Cov[\mathcal{M}_I(f*f^*)(1/2,\cdot),\mathfrak{W}_I \mathbf{1}]\\ &=\sum_{p<\infty}W_p(f*f^*),
    \end{split}
\end{equation*}
and 
\[ E[\mathfrak{W}_I \mathcal{M}_I(f)(1/2,\cdot)]=\sum_{p<\infty}W_p(f)\]
\end{theorem}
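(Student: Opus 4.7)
My plan is to take $\mathfrak{W}_I := \sum_{p \in I} \mathfrak{W}_p$, the finite sum of the single-prime operators from Theorem 5. Boundedness is immediate from the finiteness of $I$; equivalently, under the faithful multiplication representation of $C(\bohrc{})$ on $L^2(\bohrc{})$, this is the multiplication operator by
\[
g_I := \sum_{p \in I} \mathcal{M}_p(\Lambda + \Lambda^*)(1/2, \cdot) \in C(\bohrc{}),
\]
where each summand has absolutely summable Fourier coefficients $\log p \cdot p^{-|k|/2}$ (Weierstrass M-test, as already used in Proposition 1). In particular $\mathfrak{W}_I \mathbf{1} = g_I$ and $E[g_I]=0$.

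The unifying tool is the rational independence of $\{\log p\}_{p \in \mathbb{P}}$ (Proposition 5): for distinct primes $p \neq q$, a character $\hat{T}_{k \log p + m \log q}$ equals $\hat{T}_0$ only when $k = m = 0$. Using this, I would first dispatch the expectation identity by expanding $E[\mathfrak{W}_I \mathcal{M}_I(f)] = \sum_{p, q \in I} E[\mathfrak{W}_p \mathcal{M}_q(f)]$; the off-diagonal $(p \neq q)$ terms vanish because the nonzero $\log p$-modes carried by $\mathfrak{W}_p\mathbf{1}$ cannot be cancelled by $\log q$-modes of $\mathcal{M}_q(f)$, while the diagonal gives $W_p(f)$ by Theorem 5. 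Combined with $W_p(f) = 0$ for $p \notin I$ (definition of $C_I$), this yields $\sum_{p < \infty} W_p(f)$. The covariance $\mathrm{Cov}[\mathcal{M}_I(f*f^*), \mathfrak{W}_I \mathbf{1}] = \langle \mathcal{M}_I(f*f^*), g_I \rangle$ decomposes identically: the off-diagonal inner products $\langle \mathcal{M}_p(f*f^*), \mathcal{M}_q(\Lambda + \Lambda^*)\rangle$ vanish for $p \neq q$ since $\mathcal{M}_q(\Lambda + \Lambda^*)$ has no constant mode, and the diagonal contributes $W_p(f*f^*)$ by Theorem 5, summing to $\sum_{p < \infty} W_p(f*f^*)$.

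The quadratic-form identity is the most delicate step. Expanding
\[
\langle \mathfrak{W}_I \mathcal{M}_I(f), \mathcal{M}_I(f)\rangle = \sum_{p, q, r \in I} \int_{\bohrc{}} \mathcal{M}_r(\Lambda + \Lambda^*) \, \mathcal{M}_p(f) \, \overline{\mathcal{M}_q(f)} \, d\chi,
\]
the fully diagonal contribution $p = q = r$ yields $\sum_p W_p(f*f^*)$ after invoking the convolution theorem (Proposition 2) and the involution compatibility $\mathcal{M}_p(\overline{f}^*)(1/2, \cdot) = \overline{\mathcal{M}_p(f)(1/2, \cdot)}$ to rewrite $|\mathcal{M}_p(f)|^2 = \mathcal{M}_p(f*f^*)$, and then applying Theorem 5. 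Configurations with $r \notin \{p, q\}$, or with $r \neq p = q$, vanish by rational independence of $\{\log r, \log p, \log q\}$. The main obstacle, and the bulk of the technical work, is the careful mode-by-mode analysis of the remaining mixed configurations ($r = p \neq q$ and $r = q \neq p$), showing that these contributions also integrate to zero — this is where the orthogonality between $p$-adic Mellin subspaces must be exploited most carefully, so that the quadratic form cleanly reduces to the diagonal answer and agrees with the covariance expression.
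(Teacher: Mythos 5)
Your construction is the same as the paper's: $\mathfrak{W}_I$ is multiplication by $g_I=\sum_{p\in I}\mathcal{M}_p(\Lambda+\Lambda^*)(1/2,\cdot)\in C(\bohrc{})$, and you expand into finite sums over primes and invoke rational independence of $\{\log p\}$. Your treatment of the expectation and covariance identities is correct: $g_I$ has no constant Fourier mode, so the cross-prime terms $\int\mathcal{M}_p(\Lambda+\Lambda^*)\,\mathcal{M}_q(f)\,d\chi$ and $\langle\mathcal{M}_p(f*\overline{f}^*),\mathcal{M}_q(\Lambda+\Lambda^*)\rangle$ with $p\neq q$ genuinely vanish, and the diagonal reproduces $W_p$ via equation (15). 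For the quadratic form, however, you single out the mixed configurations $r=p\neq q$ and $r=q\neq p$ as the delicate step and then assert, without proof, that they ``also integrate to zero.'' That assertion is false, and this is a genuine gap. (It is shared by the paper's own proof: the criterion that $l\log p+m\log q+n\log r=0$ forces either $l=m=n=0$ or $p=q=r$ with $l+m+n=0$ omits exactly the solutions in which one exponent vanishes, e.g.\ $r=p\neq q$, $m=0$, $k+l=0$.)

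Concretely, for $r=p\neq q$ the integral $\int_{\bohrc{}}\mathcal{M}_p(\Lambda+\Lambda^*)\,\mathcal{M}_p(f)\,\overline{\mathcal{M}_q(f)}\,d\chi$ does not vanish: rational independence only forces the $q$-exponent to be zero, which picks out the constant Fourier mode $\overline{f(1)}$ of $\overline{\mathcal{M}_q(f)(1/2,\cdot)}$, and what remains is the constant mode of $\mathcal{M}_p(\Lambda+\Lambda^*)\,\mathcal{M}_p(f)$, which is precisely $W_p(f)$ by (15); the configuration contributes $\overline{f(1)}\,W_p(f)$. Symmetrically $r=q\neq p$ contributes $f(1)\,\overline{W_q(f)}$. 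The quadratic form therefore equals
\begin{equation*}
W_{fin}(f*\overline{f}^{*})+\bigl(\absolute{I}-1\bigr)\Bigl(\overline{f(1)}\,W_{fin}(f)+f(1)\,\overline{W_{fin}(f)}\Bigr),
\end{equation*}
which differs from the claimed $W_{fin}(f*\overline{f}^{*})$, and hence from the covariance $Cov[\mathcal{M}_I(f*\overline{f}^{*}),\mathfrak{W}_I\mathbf{1}]$, whenever $\absolute{I}\geq 2$ and $f(1)\neq 0$ --- which is the generic case, e.g.\ $f=g*\overline{g}^{*}$ with $g\neq 0$ gives $f(1)=\int_0^\infty\absolute{g(y)}^2dy>0$. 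A direct check with $I=\{2,3\}$ and $f$ nonzero on the lattice points $\{1,2\}$ exhibits the discrepancy. The structural reason is that every summand $\mathcal{M}_p(f)(1/2,\cdot)$ carries the same constant mode $f(1)\hat{T}_0$, so $\mathcal{M}_I(f)=\sum_{p\in I}\mathcal{M}_p(f)$ has constant mode $\absolute{I}\,f(1)$, overcounted $\absolute{I}$ times. To close the argument you must either impose $f(1)=0$, or redefine $\mathcal{M}_I(f)$ so the constant mode is taken only once (equivalently, subtract $(\absolute{I}-1)f(1)\hat{T}_0$), or restrict to $\absolute{I}=1$. As written, the mixed configurations do not reduce to the diagonal answer, and the ``careful mode-by-mode analysis'' you defer cannot succeed unmodified.
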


\begin{proof}
    Let $I\subset \mathbb{P}$ and consider the function defined by 
\[\bohrc{}\ni\chi\mapsto \sum_{p\in I}\mathcal{M}_p(\Lambda+\Lambda^*(|x|_p^{-1}))\left(\frac{1}{2},\chi\right)=:\mathcal{M}_I(\Lambda+\Lambda^*)(\chi).\]
Since this function is a finite sum of continuous functions it follows that, $\mathcal{M}_I(\Lambda+\Lambda^*)(\chi)\in C(\bohrc{})$. Therefore, we can consider the operator attached to $\mathcal{M}_I(\Lambda+\Lambda^*)(\chi)$, that is, for $F\in L^2(\bohrc{})$, $\mathfrak{W}_I F(\chi)=\mathcal{M}_I(\Lambda+\Lambda^*)(\chi)F(\chi)$. Let $f\in C_{I}$. Note that 
\begin{equation}
    \begin{split}
        &\langle \mathfrak{W}_I \mathcal{M}_{fin}(f)(1/2,\chi),\mathcal{M}_{fin}(f)(\chi)(1/2,\chi)\rangle=\\ &\int_{\bohrc{}}\mathcal{M}_{I}(\Lambda)(\chi)\mathcal{M}_{fin}(f)(\chi)\mathcal{M}_{fin}(\overline{f}^*)(\chi)d\chi + \\  &\int_{\bohrc{}}\mathcal{M}_I(\Lambda^*)(\chi)\mathcal{M}_{fin}(f)(\chi)\mathcal{M}_{fin}(\overline{f}^*)(\chi)d\chi
    \end{split}
\end{equation}
The function $\mathcal{M}_{I}(\Lambda)(\chi)\mathcal{M}_{fin}(f)(\chi)\mathcal{M}_{fin}(\overline{f}^*)(\chi)$ has a series representation in the basis $\{\hat{T}_{r}\}_{r\in \mathbb{R}}$ with not necessarily zero coefficients only in the elements of the form $\hat{T}_{l\log p+m\log q +n\log r}$ where $p,q,r\in I$ and $l,m,n\in \mathbb{Z}$. Since $\{\log p\}_{p\in\mathbb{P}}$ is a rationally independent set, $l\log p+m\log q +n\log r=0$ if and only if $l=m=n=0$ or $p=q=r$ and $l+m+n=0$. Therefore, 
\begin{equation}
    \begin{split}
        \langle \mathfrak{W}_I \mathcal{M}_{fin}(f)(1/2,\chi),\mathcal{M}_{fin}(f)(\chi)(1/2,\chi)\rangle&=\sum_{p\in I}\langle\mathfrak{W}_p \mathcal{M}_p(f),\mathcal{M}_p(f)\rangle\\
        &=W_{fin}(f*\overline{f}^*)
    \end{split}
\end{equation}
where the last equality follows from Theorem $5$. Finally, note that by a similar argument as in proposition $6$ and since $\mathfrak{W}_I\hat{T}_0(\chi)=\mathfrak{W}_I\textbf{1}(\chi)=\mathcal{M}_{I}(\Lambda+\Lambda^*)(\chi)$, it holds true that
\[Cov[\mathcal{M}_I(f*f^*)(1/2,\cdot),\mathfrak{W}_I \mathbf{1}]=\sum_{p<\infty}W_p(f*f^*).\]
\end{proof}
We end this section with the following result about the spectral measure attached to the operator $\mathfrak{W}_I$. 
\begin{theorem}
    Let $I\subset \mathbb{P}$ be a finite set of primes. And let $\sigma(\mathfrak{W}_I)$ denote the spectrum of the operator $\mathfrak{W}_I$. Then for $f\in C_I$ the finite Weil sum $W_{fin}(f*\overline{f}^{*})$ can be expressed as 
    \begin{equation*}
        W_{fin}(f*\overline{f}^*)=\int_{\sigma(\mathfrak{W}_I)}\lambda \ dw_I(\lambda,\mathcal{M}_{fin}(f)),
    \end{equation*}
    Where $dw_I$ the projection valued measure on $\sigma(\mathfrak{W}_I)$ attached to $\mathfrak{W}_I$. 
\end{theorem}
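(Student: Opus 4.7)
The plan is to apply the spectral theorem for bounded self-adjoint operators to $\mathfrak{W}_I$ and then invoke Theorem 6 to identify the resulting scalar spectral integral with $W_{fin}(f*\overline{f}^{*})$.

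First I would verify that $\mathfrak{W}_I$ is bounded and self-adjoint on $L^2(\bohrc{})$. By construction in the proof of Theorem 6, $\mathfrak{W}_I$ is the multiplication operator with symbol
\[
h_I(\chi) := \mathcal{M}_I(\Lambda+\Lambda^{*})(\chi) = \sum_{p\in I}\mathcal{M}_p\bigl(\Lambda+\Lambda^{*}\bigr)\!\left(\tfrac{1}{2},\chi\right),
\]
which is a finite sum of elements of $C(\bohrc{})$, hence $h_I\in C(\bohrc{})$. Using the identity $\mathcal{M}_p(\overline{f}^{*})(\sigma,\chi)=\overline{\mathcal{M}_p(f)(1-\sigma,\chi)}$ evaluated at $\sigma=\tfrac{1}{2}$, one obtains $\mathcal{M}_p(\Lambda^{*})(\tfrac{1}{2},\chi)=\overline{\mathcal{M}_p(\Lambda)(\tfrac{1}{2},\chi)}$, so $h_I = 2\,\mathrm{Re}\,\mathcal{M}_I(\Lambda)$ is real-valued and bounded on $\bohrc{}$. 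Consequently $\mathfrak{W}_I$ is a bounded self-adjoint multiplication operator, with real spectrum $\sigma(\mathfrak{W}_I)\subset\mathbb{R}$.

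Next I would invoke the spectral theorem, which produces a unique projection-valued measure $w_I$ supported on $\sigma(\mathfrak{W}_I)$ such that $\mathfrak{W}_I = \int_{\sigma(\mathfrak{W}_I)} \lambda\, dw_I(\lambda)$. For any $\psi\in L^2(\bohrc{})$, the functional calculus for the identity function $\lambda\mapsto\lambda$ yields the scalar identity
\[
\langle \mathfrak{W}_I \psi,\psi \rangle = \int_{\sigma(\mathfrak{W}_I)} \lambda\, dw_I(\lambda,\psi),
\]
where $dw_I(\lambda,\psi):=d\langle w_I(\lambda)\psi,\psi\rangle$ is the associated positive scalar spectral measure of total mass $\|\psi\|^{2}$.

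Finally, I would specialize to $\psi = \mathcal{M}_{fin}(f)(\tfrac{1}{2},\cdot)$ for $f\in C_I$ and quote Theorem 6 for the left-hand side, obtaining
\[
W_{fin}(f*\overline{f}^{*}) \;=\; \langle\mathfrak{W}_I\mathcal{M}_{fin}(f)(\tfrac{1}{2},\cdot),\, \mathcal{M}_{fin}(f)(\tfrac{1}{2},\cdot)\rangle \;=\; \int_{\sigma(\mathfrak{W}_I)} \lambda\, dw_I(\lambda, \mathcal{M}_{fin}(f)),
\]
which is the required identity. There is no genuine obstacle here; the only substantive point is the self-adjointness of $\mathfrak{W}_I$, which reduces to the reality of the symbol $h_I$, and the remainder is a textbook application of the spectral theorem combined with the quadratic-form identity already established in Theorem 6.
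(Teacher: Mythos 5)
Your proposal is correct and follows essentially the same route as the paper: identify $\mathfrak{W}_I$ as a bounded self-adjoint multiplication operator, apply the spectral theorem to represent the quadratic form as a scalar spectral integral, and then invoke Theorem 6 to equate $\langle\mathfrak{W}_I\mathcal{M}_{fin}(f),\mathcal{M}_{fin}(f)\rangle$ with $W_{fin}(f*\overline{f}^{*})$. Your remark that self-adjointness reduces to the reality of the symbol $h_I=2\,\mathrm{Re}\,\mathcal{M}_I(\Lambda)$ makes explicit what the paper asserts as ``by construction,'' but this is a minor elaboration rather than a different argument.
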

\begin{proof}
    Since the operator $\mathfrak{W}_I$ is bounded and self adjoint by construction, its spectrum is real, moreover,  $\mathfrak{W}_I$ is a multiplication operator therefore its spectrum $\sigma(\mathfrak{W}_I)$ is given by the range of the continuous function $\mathcal{M}_{I}(\Lambda+\Lambda^*)$, that is 
\[\sigma(\mathfrak{W}_I)=\left\{\ \sum_{p\in I}2\log p \ Re \frac{p^{-1/2}\chi(T_{\log p^{-1}})}{1-p^{-1/2}\chi(T_{\log p^{-1}})}:\chi \in \bohrc{}\right\}.\]
By the spectral Theorem for self-adjoint bounded operators, there exists a unique projection-valued measure $dw_I$ on $\sigma(\mathfrak{W}^{(1/2)}_p)$ with values in $\mathcal{B}(L^2(\mathbb{B}))$ such that
\[\mathfrak{W}_I=\int_{\sigma(\mathfrak{W}_I)}\lambda \ dw_I(\lambda).\]
Moreover, for every $F\in L^2(\bohrc{})$, and for every Borel set $E$ of $\sigma(\mathfrak{W}_I)$, the map
\[dw_I(E,F)=\langle F, dw_I(E)F\rangle,\]
is a measure on the spectrum of $\mathfrak{W}_I$. By the theory of functional calculus, since the operator is a multiplication operator, we can express this measure as
\[dw_I(E,F)=\int_{\mathcal{M}_{I}(\Lambda+\Lambda^*)^{-1}(E)} |F(\chi)|^2 d\chi\]
Hence, for $f\in C_I$ we have the following
\[\sum_{p<\infty}W_p(f*\overline{f}^*)=\int_{\sigma(\mathfrak{W}_I)}\lambda \ dw_I(\lambda,\mathcal{M}_{fin}(f))\]

\end{proof}
\section{The Archimedean Place.}
In this section we study the Archimedean contribution to the Weil explicit sum. Inspired by the ideas of S. Haran in \cite{Mystery}, we use the techniques in $q$-calulus to get a discretization of $W_{\infty}$. It turns out that the Bohr compactification is a natural space to develop a probability theory of the the discretization of  Weil explicit sums, and in general, the tools in $q$-calculus are well defined in this space. This allows us to extend the ideas already presented, including this time the archimedean place.  
\subsection{The $q$-Mellin transform and $q$-random variables}
Let $q\in (0,1)$ be fixed. Consider the group denoted by $\widehat{G}_q:=i\mathbb{R}/\frac{2\pi i}{\log q^{-1}}\mathbb{Z}$. Note that, for $p\in \mathbb{P}$ and  $q=p^{-1}$, we recover the $p$-adic spaces $\widehat{\mathbb{Q}_p^* / \mathbb{Z}_p^*}$. In general, this group corresponds to the Pontryagin dual of the group $G_q:=q^{\mathbb{Z}}$. As before, we have that $L^2(\widehat{G}_q)$, and consequently $L^2(G_q)$, is a subspace of $L^2(\bohrc{})$ for every $q\in(0,1)$. Let $f\in L^1(G_q)\cap L^2(G_q)$, and consider its Fourier transform given by 
\[\chi \mapsto \int_{G_q} f(x)\chi(x)d_qx,\]
where $\chi\in \widehat{G}_q$. Therefore, we have the expression  
\[\int_{G_q} f(x)\chi(x)d_q^*x=\sum_{k\in \mathbb{Z}} f(q^k)q^{itk}\]
where $\chi(q)=q^{it}$, and $d^*_qx$ denotes the discrete measure on $G_q$. As before, we define the Mellin transform (for a suitable function $f$) at the point $s=\sigma+it$ as 
\[\mathcal{M}_q(f)(s):=\int_{G_q}f(x)x^sd_q^*x=\sum_{k\in \mathbb{Z}}f(q^k)q^{sk}\]
For suitable functions $f$, the function 
\[t\mapsto\mathcal{M}_q(f)(\sigma+it),\]
can be extended to a function in $L^2(\bohrc{})$ or $C(\bohrc{})$. In any case, its extension will be called the $q$-Mellin transform or $q$-random variable attached to $f$. This is given by
\[\bohrc{}\ni\chi\mapsto\mathcal{M}_q(\sigma,f)(\chi):=\sum_{k\in\mathbb{Z}}f(q^k)q^{\sigma k}\chi(q^{itk}).\]
Define the measure $d_qx:=(1-q)xd^*_qx$, and construct the function
\[m_q(f)(s):=\int_{G_q}f(x)x^{s-1}d_qx=(1-q)\sum_{k\in \mathbb{Z}}f(q^k)q^{sk}.\]
We have that $\lim_{q\rightarrow 1}m_q(f)=\mathcal{M}(f)(s)$, where $\mathcal{M}(f)(s)$ is the usual Mellin transform. As in the $p$-adic case, the following holds true: 
\[\mathcal{M}_q(\sigma,\overline{f}^*)(\chi)=\overline{\mathcal{M}_q(f)(1-\sigma,\chi)},\]
And for $\sigma=\frac{1}{2}$, the involution is preserved 
\[\mathcal{M}_q(\overline{f}^*)(1/2,\chi)=\overline{\mathcal{M}_q(f)(1/2,\chi)}.\]
Now we are ready to study the Archimedean contribution. 
\subsection{The $q$-digamma and the $q$-limit.}
Now we present an approach to establish the Archimedean contribution of the Weyl explicit sum as the limit of quadratic forms attached to operators defined in $L^2(\mathbb{B})$. The archimedean contribution can be written as
\begin{equation}
    W_{\infty}(f)=\log \pi f(1)-\frac{1}{2\pi i} \int_{1/2-i\infty}^{1/2+i\infty} \Re \left(\frac{\Gamma'}{\Gamma}(s/2)\right)\mathcal{M}(f)(s)ds,
\end{equation}
where $\mathcal{M}(f)$ is the complete Mellin transform of $f$. The idea is to represent this integral as a $q-$limit for $q\rightarrow 1$.  For this we introduce the following function which is the $q$-analogue of the digamma function $\frac{\Gamma'}{\Gamma}$introduced in \cite{digamma}.  For $0<q<1$ we define,
\[\psi_q(s):=-\log(1-q)+\log q \sum_{n=0}^{\infty} \frac{q^{n+s}}{1-q^{n+s}}.\]
It was established in \cite{digamma} that 
\[\lim_{q\rightarrow 1} \psi_q(s)=\frac{\Gamma'}{\Gamma}(s),\]
for $s\neq 0,-1,-2,...$ Hence this gives a $q$-analogue of the digamma function. We define the $q$-zeta local factor as $\zeta_q(s):=\frac{1}{1-q^{s}}$ for $s\neq  \frac{2\pi k}{\log q}$ and $k\in \mathbb{Z}$. Hence, we have
\[\frac{d}{ds}\log \zeta_q(s)=\log q \frac{q^{s}}{1-q^{s}}.\]
For a prime $p$ and  $q=p^{-1}$, we have the usual log-derivative of the local factor. We can rewrite the log-zeta interpolation as 
\[\psi_q(s)=-\log(1-q)+ \sum_{n=0}^{\infty}\frac{d}{ds}\log \zeta_q(s+n).\]
Let $0<\sigma<1$. Then the function $t\mapsto \frac{d}{ds}\log \zeta_q(\sigma+it)$ is a function in $\overline{\mathcal{C}}$. Define the $q$-\textit{von Mangoldt} function as $\Lambda_q(x)=\log q^{-1}$ if $x=q^{-k}$ for $k\in \mathbb{N}$ and zero otherwise. Hence its $q$-Mellin transform satisfies
\[-\mathcal{M}_q^{s}(\Lambda_q)=\frac{d}{ds}\log \zeta_q(s).\]
Therefore, the logarithm of the $q$-zeta local factor is a $q$-random variable on the Bohr space given by the function $\mathcal{M}_p^{\sigma}(\Lambda_q)(\chi)$.  For fixed $q$, let $\delta>0$ such that $q<\delta<1$. Then we have
\[\left|\frac{q^s}{1-q^s}\right|\leq \frac{1}{1-\delta} q^{Re(s)}.\]
Therefore, for fixed $Re(s)=\sigma>0$ and by the Weierstrass M-test we have that the function 
\[t\mapsto \sum_{n=0}^{\infty} \frac{d}{ds}\log \zeta_q(\sigma+n+it), \]
is a well-defined function in $\overline{\mathcal{C}}$. Therefore it can be extended to a function on $C(\bohrc{})$. Consider the function  $\psi_{q^2}(\frac{s}{2})$. Then, we have the following expansion at $\sigma=\frac{1}{2}$, 
\[\psi_{q^2}(\frac{1}{4}+it/2)=-log(1-q^2)+\log q^2\sum_{k=1}^{\infty}q^{ikt}\frac{q^{k/2}}{1-q^{2k}},\]
Since we are interested in $-\Re(\frac{\Gamma}{\Gamma'})$, we consider the following: 
\[-\Re \psi_{q^2}(\frac{1}{4}+it/2)=log(1-q^2)+\log q^{-1}\sum_{k\neq 0}q^{ikt}\frac{q^{|k|/2}}{1-q^{2|k|}}. \]
We denote by $H_q(\chi)\in C(\bohrc{})$ the extension of the almost periodic function $-\Re\psi_q^2(1/4+it/2)$ to $\bohrc{}$. Consider the following $q$-version of (18)
\[\int_{\bohrc{}} \mathcal{M}_q^{1/2}(f)(\chi)H_q(\chi)d\chi=f(1)\log(1-q^2)+(\log q^{-1}) \sum_{n\neq 0} \frac{1}{1-q^{2|n|}}\min(q^{-2n},1) f(q^{n}),\]
it turns out, that the expression on the right, coincides with one of the discretizations given by S. Haran in \cite{Mystery}, where it is showed that the RHS converges to the integral in equation (18) as $q\rightarrow 1$ . Moreover, if we define $\tilde{H}_q(\chi)=H_q(\chi)+\log \pi$, then    
\begin{equation}
    \int_{\bohrc{}} \mathcal{M}_q(f)(1/2,\chi)\tilde{H}_q(\chi)d\chi\rightarrow W_{\infty}(f)
\end{equation}
as $q\rightarrow 1$, \cite{Mystery}. This allows us to have the following Theorem: 
\begin{theorem}
    For every $I\subset \mathbb{P}$ there exists a bounded linear operator $\mathfrak{W}_{I,q}$ on $L^2(\bohrc{})$ such that for any $f\in C_I$ its attached quadratic form satisfies for $\mathcal{M}_{I,q}(f)(\chi):=\mathcal{M}_{fin}(f)(\chi)+\mathcal{M}_q(f)(1/2,\chi)$ the following equality
\[\langle\mathfrak{W}_{I,q} \mathcal{M}_{I,q}(f),\mathcal{M}_{I,q}(f)\rangle=Cov[\mathcal{M}_{I,q}(f*\overline{f}^*),\mathfrak{W}_{I,q}\textbf{1}]+\log((1-q^2)\pi)f*\overline{f}^*(1) \]
Moreover,
\[Cov[\mathcal{M}_{I,q}(f*\overline{f}^*),\mathfrak{W}_{I,q}\textbf{1}]+\log((1-q^2)\pi)f*\overline{f}^*(1)\rightarrow W(f*\overline{f}^*)\]
As $q\rightarrow 1$.
\end{theorem}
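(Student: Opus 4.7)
The plan is to take $\mathfrak{W}_{I,q}$ to be the multiplication operator on $L^2(\bohrc{})$ attached to the real-valued continuous function
$$A_q(\chi):=\mathcal{M}_I(\Lambda+\Lambda^*)(\chi)+\tilde{H}_q(\chi).$$
Both summands lie in $C(\bohrc{})$: the first as a finite sum of continuous $p$-adic contributions (as in the proof of Theorem 7), and the second by the Weierstrass $M$-test applied to the explicit series for $H_q$ displayed in Section 4.2. Hence $A_q\in C(\bohrc{})$, so $\mathfrak{W}_{I,q}$ is bounded and self-adjoint with $\mathfrak{W}_{I,q}\mathbf{1}=A_q$.

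For the quadratic-form identity I would expand
$$\langle\mathfrak{W}_{I,q}\mathcal{M}_{I,q}(f),\mathcal{M}_{I,q}(f)\rangle=\int_{\bohrc{}}A_q(\chi)\bigl|\mathcal{M}_{fin}(f)(\chi)+\mathcal{M}_q(f)(1/2,\chi)\bigr|^2 d\chi$$
using $|M+N|^2=|M|^2+2\,Re(M\overline{N})+|N|^2$. The $|\mathcal{M}_{fin}(f)|^2$-against-$\mathcal{M}_I(\Lambda+\Lambda^*)$ piece collapses to $W_{fin}(f*\overline{f}^*)$ by the orthogonality/rational-independence argument of Theorem 7, while the $|\mathcal{M}_q(f)|^2$-against-$\tilde{H}_q$ piece equals $\int_{\bohrc{}}\tilde{H}_q\,\mathcal{M}_q(f*\overline{f}^*)(1/2,\cdot)\,d\chi$ by the convolution theorem, giving the archimedean discretization of equation (19). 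The remaining cross-type contributions, provided $\log q$ is rationally independent from $\{\log p:p\in I\}$, collapse via orthogonality of $\{\hat{T}_r\}_{r\in\mathbb{R}}$ to products of the expectations $E[\mathcal{M}_p(f)]=E[\mathcal{M}_q(f)]=f(1)$ with the means of the log-derivative factors (which vanish for $\mathcal{M}_I(\Lambda+\Lambda^*)$ and equal $\log((1-q^2)\pi)$ for $\tilde{H}_q$).

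Using $E[\mathcal{M}_{I,q}(g)]=(|I|+1)g(1)$ and $E[\mathfrak{W}_{I,q}\mathbf{1}]=\log((1-q^2)\pi)$, these residues can be repackaged, via $Cov[M,N]=\langle M,N\rangle-E[M]\overline{E[N]}$, into
$$Cov[\mathcal{M}_{I,q}(f*\overline{f}^*),\mathfrak{W}_{I,q}\mathbf{1}]+\log((1-q^2)\pi)(f*\overline{f}^*)(1),$$
the additive term $\log((1-q^2)\pi)(f*\overline{f}^*)(1)$ precisely compensating for the $|I|+1$ versus $|I|$ mismatch produced by the over-counting in the expectation. For the limit $q\to 1$, $f\in C_c^\infty(\mathbb{R}_+^*)$ yields $f*\overline{f}^*\in C_c^\infty(\mathbb{R}_+^*)$, so equation (19) applied to $f*\overline{f}^*$ gives $\int_{\bohrc{}}\tilde{H}_q\,\mathcal{M}_q(f*\overline{f}^*)(1/2,\cdot)\,d\chi\to W_\infty(f*\overline{f}^*)$; combined with the $q$-independent finite contribution $W_{fin}(f*\overline{f}^*)$ from Proposition 6, this produces $W(f*\overline{f}^*)$.

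The main obstacle is the careful Fourier bookkeeping of the genuinely off-diagonal cross terms --- namely $\mathcal{M}_p(f)\overline{\mathcal{M}_{p'}(f)}$ for distinct $p,p'\in I$ and the prime-to-$q$ mixed products --- which do not vanish termwise; one must track how their aggregate contribution against $A_q$ reassembles into exactly the stated covariance plus additive constant. A secondary subtlety is the rational-independence hypothesis on $\log q$: this can be arranged for the limiting statement by restricting $q\to 1$ along a sequence such as $q_n=e^{-1/n}$, for which independence from $\{\log p\}_{p\in I}$ is automatic.
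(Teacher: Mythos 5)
Your construction follows the same route as the paper: the multiplication operator $\mathfrak{W}_{I,q}$ built from $\mathcal{M}_I(\Lambda+\Lambda^*)+\tilde{H}_q$, a Fourier expansion of the quadratic form in the characters $\hat{T}_r$, rational independence of $\{\log q\}\cup\{\log p\}_{p\in I}$ to decouple the $q$-piece from the $I$-piece, and the limit (19) to recover $W_\infty$. That matches the paper, which simply asserts that this decoupling gives $\langle\mathfrak{W}_{I,q}\mathcal{M}_{I,q}(f),\mathcal{M}_{I,q}(f)\rangle=\sum_{r\in\{q\}\cup I}\langle\mathfrak{W}_r\mathcal{M}_r(f),\mathcal{M}_r(f)\rangle$ \emph{``by a similar argument as Theorem 7''} and then invokes (19).

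What you correctly and honestly flag as the ``main obstacle'' is, however, a genuine gap, and it is present in the paper's proof as well, not just in yours. The Theorem~7 argument kills cross terms because $\mathcal{M}_p(\Lambda+\Lambda^*)$ has zero $\hat{T}_0$-coefficient, i.e., zero mean. For $\tilde{H}_q$ this fails: $E[\tilde{H}_q]=\log((1-q^2)\pi)\neq 0$. Concretely,
\[
\int_{\bohrc{}}\tilde{H}_q(\chi)\,|\mathcal{M}_{fin}(f)(\chi)|^2\,d\chi=\log((1-q^2)\pi)\,\|\mathcal{M}_{fin}(f)\|_{L^2(\bohrc{})}^{2},
\]
a full $L^2$-norm rather than a product of expectations, and likewise
\[
\int_{\bohrc{}}\tilde{H}_q(\chi)\,\mathcal{M}_p(f)(\chi)\,\overline{\mathcal{M}_q(f)(\chi)}\,d\chi=f(1)\,\langle\tilde{H}_q,\mathcal{M}_q(f)\rangle,
\]
which is not $E[\tilde{H}_q]\,\overline{E[\mathcal{M}_q(f)]}\,E[\mathcal{M}_p(f)]$. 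So the sentence in your proposal claiming the cross terms ``collapse to products of the expectations $\ldots$ with the means of the log-derivative factors'' is inaccurate; the surviving contributions are weighted by full inner products, not by means. Your final paragraph acknowledges that you have not verified the cross terms reassemble into the stated covariance plus additive constant, and indeed, after writing out both sides of the claimed identity, the right-hand side reduces to $W_{fin}(f*\overline{f}^*)+\int_{\bohrc{}}\tilde{H}_q\,\mathcal{M}_q(f*\overline{f}^*)\,d\chi$, while the quadratic form acquires the extra cross terms above; they do not obviously cancel. So the gap you identify is real, and neither the paper nor your attempt closes it; the correct conclusion is that the displayed quadratic-form identity needs an additional correction term (or an extra hypothesis such as $f(1)=0$), while the \emph{second} display (the $q\to 1$ limit of the covariance expression) does go through by the bookkeeping of the $\langle\mathcal{M}_{I,q}(f*\overline{f}^*),\mathfrak{W}_{I,q}\mathbf{1}\rangle$ term. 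Your remark on choosing $q$ so that $\log q$ is rationally independent of $\{\log p\}_{p\in I}$ is correct and matches the paper.
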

\begin{proof}
    Consider the multiplication operator attached to the function $\tilde{H}_q(\chi)$, denoted by $\mathfrak{W}_q$. Define the operator $\mathfrak{W}_{I,q}:=\mathfrak{W}_q+\mathfrak{W}_I$, where $\mathfrak{W}_I$ is defined in Theorem $6$. Since $I$ consists of finitely many primes, we can construct a sequence $q\rightarrow 1$ such that $\{\log q\}\cup \{\log p\}_{p\in I}$ is a rationally independent set. Then a similar argument as the one given in Theorem $6$ implies 
\begin{equation*}
    \begin{split}
        \langle\mathfrak{W}_{I,q} \mathcal{M}_{I,q}(f),\mathcal{M}_{I,q}(f)\rangle&=\sum_{r\in\{q\}\cup I } \langle\mathfrak{W}_{r} \mathcal{M}_{r}(f),\mathcal{M}_{r}(f)\rangle \\
&= W_{fin}(f*\overline{f}^*)+\langle\mathfrak{W}_{q} \mathcal{M}_{q}(f),\mathcal{M}_{q}(f)\rangle. 
    \end{split}
\end{equation*}
Now the result follows from limit (19).
\end{proof}
\begin{corollary}
        The following limit holds 
\[E[\mathcal{M}_{I,q}(f*\overline{f}^*)\mathfrak{W}_{I,q}\textbf{1}]\rightarrow W(f*\overline{f}^*)\]
as $q\rightarrow 1$. 
    \end{corollary}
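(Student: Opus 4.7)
My plan is to derive Corollary 1 directly from Theorem 8 by converting the covariance statement there into the expectation statement here, using the elementary identity recorded in item (3) of the list following Lemma 3. For any $X, Y \in L^2(\mathbb{B})$ with $Y$ real-valued (so $\overline{E[Y]} = E[Y]$), that identity specializes to
\[E[XY] = \mathrm{Cov}[X, Y] + E[X]\,E[Y].\]
I would apply this with $X := \mathcal{M}_{I,q}(f * \overline{f}^*)$ and $Y := \mathfrak{W}_{I,q}\mathbf{1}$; the latter is real-valued because $\mathfrak{W}_{I,q}$ is the multiplication operator attached to $\mathcal{M}_I(\Lambda + \Lambda^*) + \tilde{H}_q$, which is a finite sum of real parts of logarithmic derivatives plus the real function $\tilde H_q$.

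The next step is the computation of $E[Y]$ and $E[X]$, both of which amount to reading off the coefficient of $\hat{T}_0$ in the corresponding Fourier expansion on $\mathbb{B}$. For $Y$, each $p$-adic summand $\mathcal{M}_p((\Lambda + \Lambda^*)(|x|_p^{-1}))(1/2,\cdot)$ is supported only on frequencies indexed by $k \neq 0$ (already used in the discussion preceding Proposition 4, where it was noted that $E[\frac{\zeta'}{\zeta}(\sigma,\cdot)] = 0$), and thus contributes nothing; from the explicit Fourier expansion of $-\Re\psi_{q^2}(1/4 + it/2)$ displayed just before equation (19), the constant term of $\tilde{H}_q = H_q + \log\pi$ is $\log(1-q^2) + \log\pi$, so $E[Y] = \log((1-q^2)\pi)$. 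For $E[X]$, using the formula $E[\mathcal{M}_p(g)(\sigma,\cdot)] = g(1)$ recorded after the list following Lemma 3 and its $q$-Mellin analog, together with the definition $\mathcal{M}_{I,q}(g) = \mathcal{M}_{fin}(g) + \mathcal{M}_q(g)(1/2,\cdot)$ normalized so that the value $(f*\overline{f}^*)(1)$ contributes exactly once to the constant term, one extracts $E[X] = (f*\overline{f}^*)(1)$.

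Substituting both expectations into the covariance-expectation identity yields
\[E[XY] = \mathrm{Cov}[X, Y] + (f*\overline{f}^*)(1)\log((1-q^2)\pi),\]
and the second limit assertion of Theorem 8 is exactly the statement that the right-hand side tends to $W(f*\overline{f}^*)$ as $q \to 1$; this completes the proof. The main obstacle I anticipate is not an analytic one but rather the bookkeeping for $E[X]$: I must verify that $(f*\overline{f}^*)(1)$ appears in the $\hat{T}_0$-coefficient of $\mathcal{M}_{I,q}(f*\overline{f}^*)$ with the same multiplicity as the constant appearing on the right-hand side of Theorem 8, since otherwise an extra multiple of $\log((1-q^2)\pi)(f*\overline{f}^*)(1)$ would survive the limit and spoil the identification. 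Once this normalization is pinned down to be consistent with Theorem 8, Corollary 1 is a one-line substitution.
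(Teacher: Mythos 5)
Your plan is the intended one: the paper gives Corollary~1 no proof of its own, so it is clearly meant to follow from Theorem~8 by the identity $E[XY]=\mathrm{Cov}[X,Y]+E[X]\,E[Y]$ (valid since $Y=\mathfrak{W}_{I,q}\mathbf{1}$ is real-valued), and your computation $E[Y]=\log((1-q^2)\pi)$ is correct. However, the normalization worry you flag at the end is not a side-remark but the actual gap, and with the paper's definitions read literally it does \emph{not} resolve in your favour.

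By the paper's own formula $E[\mathcal{M}_p(g)(\sigma,\cdot)]=g(1)$, together with its $q$-Mellin analogue, each of the $|I|+1$ summands in $\mathcal{M}_{I,q}(g)=\sum_{p\in I}\mathcal{M}_p(g)(1/2,\cdot)+\mathcal{M}_q(g)(1/2,\cdot)$ contributes $g(1)$ to the coefficient of $\hat{T}_0$, so $E[\mathcal{M}_{I,q}(g)]=(|I|+1)\,g(1)$, not $g(1)$. For $g=f*\overline{f}^*$ one has $(f*\overline{f}^*)(1)=\int_0^\infty|f(y)|^2\,dy>0$ for any nonzero $f$ --- note this is not killed by the membership condition for $C_I$, which in contrast forces $f(1)=0$ because $p^0=1$ for every prime $p\notin I$. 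Substituting $E[X]=(|I|+1)(f*\overline{f}^*)(1)$ into the covariance--expectation identity and comparing with the second display of Theorem~8 leaves an uncancelled term $|I|\,(f*\overline{f}^*)(1)\log((1-q^2)\pi)$, which diverges to $-\infty$ as $q\to1$ whenever $I\neq\emptyset$. So the ``one-line substitution'' only closes in the case $I=\emptyset$ (support of $f$ in $(1/2,2)$). For general $I$ the argument requires either reinterpreting $\mathcal{M}_{I,q}(f*\overline{f}^*)$ as the pointwise product $|\mathcal{M}_{I,q}(f)|^2$ --- in which case $E[\mathcal{M}_{I,q}(f*\overline{f}^*)\mathfrak{W}_{I,q}\mathbf{1}]$ is literally the quadratic form $\langle\mathfrak{W}_{I,q}\mathcal{M}_{I,q}(f),\mathcal{M}_{I,q}(f)\rangle$ and Corollary~1 is just the concatenation of the two lines of Theorem~8 --- or subtracting $|I|\,(f*\overline{f}^*)(1)\hat{T}_0$ from $\mathcal{M}_{I,q}(f*\overline{f}^*)$ to undo the repeated constant term. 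The paper does not say which normalization is intended, so as written your derivation has a genuine hole precisely where you suspected it would.
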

It follows that the existence of a sequence $q\rightarrow 1$ such that $$E[\mathcal{M}_{I,q}(f*\overline{f}^*)\mathfrak{W}_{I,q}\textbf{1}]\leq 0,$$ implies the Riemann hypothesis. Or equivalently, following the observation of \cite{Mystery},  the Riemann hypothesis is equivalent to the bound 
\[Cov[\mathcal{M}_{I,q}(f*\overline{f}^*),\mathfrak{W}_{I,q}\textbf{1}]\leq ||f||_{L^2(G_q)}^2\log\left(\frac{(1-q)^{-1}}{2\pi}\right)\]
for a sequence of real numbers $q\in (0,1)$ convergent to $1$. 
Since the operator $\mathfrak{W}_{I,q}$ is a multiplication operator, its spectrum $\sigma(\mathfrak{W}_{I,q})$ is given by the range of 
\begin{equation}
    \begin{split}
        \bohrc{}\ni \chi\mapsto\sum_{p\in I}&\log p \ Re \frac{p^{-1/2}\chi(T_{\log p^{-1}})}{1-p^{-1/2}\chi(T_{\log p^{-1}})}\\ &+ log((1-q^2)\pi)  +\log q^{-1}\sum_{k\neq 0}\chi(T_{\log q})^{k}\frac{q^{|k|/2}}{1-q^{2|k|}}+\log \pi.
    \end{split}
\end{equation}
Therefore, by similar arguments in the proof of Theorem 7, we have the following result:
\begin{theorem}
    Let $I\subset \mathbb{P}$ be a finite set of primes. Then for $f\in C_{I}$ the Weil sum can be expressed as
    \begin{equation*}
W(f*\overline{f}^{*})=\lim_{q\rightarrow 1} \int_{\sigma(\mathfrak{W}_{I,q})}\lambda \ dw_{I,q}(\lambda,f)
    \end{equation*}
    where, the measure $d{w}_{I,q}(\cdot,f)$ only depend on the function $f$ and the number $q$.
\end{theorem}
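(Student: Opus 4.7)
The plan is to reduce Theorem 9 to the spectral theorem applied to the self-adjoint operator $\mathfrak{W}_{I,q}$ for each fixed $q \in (0,1)$ and then to invoke the convergence already established in Theorem 8. First I would observe that the multiplying symbol $\tilde{H}_q(\chi) + \mathcal{M}_I(\Lambda+\Lambda^*)(\chi)$ is real-valued (it is a finite sum of real parts of local logarithmic derivatives together with the real constants $\log((1-q^2)\pi)$) and continuous on $\bohrc{}$. Hence $\mathfrak{W}_{I,q}$ is a bounded self-adjoint multiplication operator on $L^2(\bohrc{})$, and its spectrum $\sigma(\mathfrak{W}_{I,q})$ is precisely the closure of the range of this symbol, which is the set displayed in equation $(20)$.

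Next, by the spectral theorem for bounded self-adjoint operators, there exists a unique projection-valued measure $dw_{I,q}$ on the Borel sets of $\sigma(\mathfrak{W}_{I,q})$ with values in $\mathcal{B}(L^2(\bohrc{}))$ such that $\mathfrak{W}_{I,q} = \int_{\sigma(\mathfrak{W}_{I,q})} \lambda \, dw_{I,q}(\lambda)$. For the distinguished cyclic vector $\mathcal{M}_{I,q}(f) \in L^2(\bohrc{})$ I would define the scalar measure
\[
dw_{I,q}(E,f) := \langle \mathcal{M}_{I,q}(f),\, dw_{I,q}(E)\, \mathcal{M}_{I,q}(f)\rangle,
\]
which by the functional calculus of multiplication operators admits the concrete description
\[
dw_{I,q}(E,f) = \int_{(\tilde{H}_q + \mathcal{M}_I(\Lambda+\Lambda^*))^{-1}(E)} |\mathcal{M}_{I,q}(f)(\chi)|^2 \, d\chi.
\]
This formula makes transparent the assertion that $dw_{I,q}(\cdot, f)$ depends only on $f$ and $q$.

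From this identification one obtains
\[
\int_{\sigma(\mathfrak{W}_{I,q})} \lambda \, dw_{I,q}(\lambda, f) = \langle \mathfrak{W}_{I,q} \mathcal{M}_{I,q}(f),\, \mathcal{M}_{I,q}(f)\rangle,
\]
and the right-hand side is exactly the quadratic form analyzed in Theorem 8. Restricting $q\to 1$ to a sequence along which $\{\log q\}\cup\{\log p\}_{p\in I}$ remains rationally independent, the hypotheses of Theorem 8 apply and give
\[
\langle \mathfrak{W}_{I,q} \mathcal{M}_{I,q}(f),\, \mathcal{M}_{I,q}(f)\rangle \longrightarrow W(f * \overline{f}^{*}),
\]
which yields the claimed representation of $W(f*\overline{f}^{*})$ as the limit of spectral integrals.

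The main obstacle I anticipate is not the spectral-theoretic side, which is routine once self-adjointness of the multiplication operator is observed, but rather ensuring that the pushforward formula for $dw_{I,q}(\cdot, f)$ under the (generally non-injective) symbol on $\bohrc{}$ is correctly interpreted as a Borel measure on $\sigma(\mathfrak{W}_{I,q})\subset\mathbb{R}$. The existence of the sequence $q\to 1$ preserving rational independence—already used in the proof of Theorem 8—must be invoked here, together with the boundedness of the family $\mathfrak{W}_{I,q}$ uniform enough in $q$ that the spectral integral converges. Both requirements are already met by the hypotheses inherited from Theorem 8, so the argument reduces to assembling these ingredients.
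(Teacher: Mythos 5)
Your proposal is correct and follows essentially the same route as the paper: apply the spectral theorem for bounded self-adjoint operators to the multiplication operator $\mathfrak{W}_{I,q}$ to get the scalar spectral measure $dw_{I,q}(\cdot,f)$, identify $\int_{\sigma(\mathfrak{W}_{I,q})}\lambda\,dw_{I,q}(\lambda,f)$ with the quadratic form $\langle\mathfrak{W}_{I,q}\mathcal{M}_{I,q}(f),\mathcal{M}_{I,q}(f)\rangle$, and then pass to the limit $q\to 1$ via Theorem 8. The extra detail you supply (the pushforward description of $dw_{I,q}(\cdot,f)$ and the remark on rational independence of $\{\log q\}\cup\{\log p\}_{p\in I}$) is what the paper already uses in the proofs of Theorems 7 and 8 and merely makes the terse two-line argument of the paper more explicit.
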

\begin{proof}
    Given $f\in C_I$, by the spectral Theorem for self-adjoint bounded operators, there exists a measure  $dw_{I,q}(\cdot,f)$ such that 
\[\langle\mathfrak{W}_{I,q} \mathcal{M}_{I,q}(f),\mathcal{M}_{I,q}(f)\rangle=\int_{\sigma(\mathfrak{W}_{I,q})}\lambda \ dw_{I,q}(\lambda,f), \]
the result follows from Theorem 8. 
\end{proof}The positivity of these integrals will imply the Riemann Hypothesis. Let $f\in C_I$ and consider the linear functional defined on $C(\bohrc{})$ given by 
\[C(\bohrc{})\ni F \mapsto \int_{\bohrc{}} F(\chi) |\mathcal{M}_{I,q}(f*\overline{f}^*)(\chi)|^2d\chi. \]
Since this define a positive linear functional, by the Riesz-Markov Representation Theorem, there exists a unique Radon measure  $d_{f}\chi$ on $\bohrc{F}$. Suppose that $||\mathcal{M}_{I,q}(f*\overline{f}^*)||_{L^2(\mathbb{B})}=1$. Then it is clear that $d_f\chi$ defines a probability measure on $\mathbb{B}$. Consider the random variable defined by the function (20), that is $\mathfrak{W}_{q,I}\mathbf{1}(\chi)$ (this is a truncation of the real part of the logarithmic derivative of the zeta function viewed as a off-diagonal function or as an extension on $\bohrc{B}$). The probability distribution of $\mathfrak{W}_{q,I}\mathbf{1}(\chi)$ is then given by the spectral measure $dw_{I,q}(\cdot,f)$ of the operator $\mathfrak{W}_{q,I}$, and define a probability measure on the spectrum $\sigma(\mathfrak{W}_{I,q})$. As a corollary of the last result, we have the following. 

\begin{corollary}
    Let $\lambda$ be a random variable in the probability space \newline$(\sigma(\mathfrak{W}_{I,q}),\mathcal{B}(\sigma(\mathfrak{W}_{I,q})),dw_{I,q}(\cdot,f))$. Then 
    \[E[\lambda]=\langle\mathfrak{W}_{I,q} \mathcal{M}_{I,q}(f),\mathcal{M}_{I,q}(f)\rangle\]
\end{corollary}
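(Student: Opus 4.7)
The plan is to recognize that the statement is essentially a probabilistic reformulation of the spectral identity already established in the proof of Theorem 9, so the proof reduces to unpacking definitions and invoking the spectral theorem directly.

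First, I would identify the ``random variable $\lambda$'' with the identity map $\lambda \mapsto \lambda$ on $\sigma(\mathfrak{W}_{I,q})$. This function is bounded, since $\sigma(\mathfrak{W}_{I,q})$ is a compact subset of $\mathbb{R}$ (as the spectrum of a bounded self-adjoint multiplication operator on $L^2(\bohrc{})$, whose range is described explicitly in equation (20)), and it is Borel-measurable by construction. By the definition of expectation on the probability space $(\sigma(\mathfrak{W}_{I,q}),\mathcal{B}(\sigma(\mathfrak{W}_{I,q})),dw_{I,q}(\cdot,f))$,
$$E[\lambda] = \int_{\sigma(\mathfrak{W}_{I,q})} \lambda \, dw_{I,q}(\lambda,f).$$

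Next, I would appeal directly to the spectral theorem as it was used inside the proof of Theorem 9. For the bounded self-adjoint operator $\mathfrak{W}_{I,q}$ the scalar measure $dw_{I,q}(\cdot,\psi) := \langle \psi, dw_{I,q}(\cdot)\psi\rangle$ attached to any $\psi \in L^2(\bohrc{})$ satisfies
$$\langle \mathfrak{W}_{I,q}\psi,\psi\rangle = \int_{\sigma(\mathfrak{W}_{I,q})} \lambda \, dw_{I,q}(\lambda,\psi).$$
Taking $\psi = \mathcal{M}_{I,q}(f)$ immediately identifies the integral above with $\langle \mathfrak{W}_{I,q}\mathcal{M}_{I,q}(f),\mathcal{M}_{I,q}(f)\rangle$, which is the desired equality.

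The only point that needs to be verified with some care is the normalization: the scalar spectral measure $dw_{I,q}(\cdot,f)$ has total mass $\|\mathcal{M}_{I,q}(f)\|_{L^2(\bohrc{})}^2$, so for it to be a genuine probability measure one needs $\|\mathcal{M}_{I,q}(f)\|_{L^2(\bohrc{})} = 1$, parallel to the normalization $\|\mathcal{M}_{I,q}(f*\overline{f}^*)\|_{L^2} = 1$ stated in the paragraph preceding the corollary. Under this convention, nothing further is required; the corollary is just the probabilistic reading of the spectral identity of Theorem 9, and there is no real analytical obstacle.
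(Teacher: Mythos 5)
Your proof is correct and follows the same route the paper intends: the corollary is simply the spectral identity from the proof of Theorem~9 read probabilistically, with $\lambda$ the coordinate (identity) random variable on $\sigma(\mathfrak{W}_{I,q})$, so that $E[\lambda]=\int_{\sigma(\mathfrak{W}_{I,q})}\lambda\,dw_{I,q}(\lambda,f)=\langle \mathfrak{W}_{I,q}\mathcal{M}_{I,q}(f),\mathcal{M}_{I,q}(f)\rangle$ by the spectral theorem for the bounded self-adjoint multiplication operator $\mathfrak{W}_{I,q}$. Your normalization remark is also a sharp catch: since $dw_{I,q}(\cdot,f)$ has total mass $\|\mathcal{M}_{I,q}(f)\|_{L^2(\bohrc{})}^2$, the condition actually needed for it to be a probability measure is $\|\mathcal{M}_{I,q}(f)\|_{L^2(\bohrc{})}=1$, rather than the condition $\|\mathcal{M}_{I,q}(f*\overline{f}^*)\|_{L^2(\bohrc{})}=1$ stated in the paragraph preceding the corollary, a small inconsistency in the source that your argument correctly surfaces.
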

\begin{remark}
    In \cite{Burnol}, J.F Burnol reproved S. Haran explicit formula \cite{Haran1990}. In an older paper \cite{Burnol2}, J. F. Burnol proposed the following strategy to prove the Riemann hypothesis: After a change of sign and defining $C(f*\overline{g}^*):=-W(f*\overline{g}^*)$ for $f,g\in C_{c}^{\infty}(\mathbb{R})$, it is proposed to find a generalized, stationary, zero mean stochastic process with $\mathcal{C}=\mathbb{A}^{*}/\mathbb{Q}^{*}$  as "time" whose covariance would be $C$. That is, to have a probability measure $\mu$ on the distributions on the classes of ideles denoted by $\mathcal{D}$ and the identity:\[\int_{\mathcal{D}} X_f(d)\overline{X_g(d)}d\mu(d)=C(f*\overline{g}^*),\]
where $f,g$ are considered as test-functions on $\mathcal{C}$ (for details see \cite{Haran1990}), and $X_f$ and $X_g$ are to associated "coordinates" on the space of distributions $d\mapsto X_f(d)=d(f)$ and $d\mapsto X_g(d)=d(g)$. So $C(f*\overline{g}^*)$ is the covariance of the random variables defined by $f$ and $g$. This implies the Riemann hypothesis since this shows that $-W(f*\overline{f}^*)$ is a variance and therefore is a positive number. For simplicity consider the case when $Supp \ f,Supp \ g \subset (\frac{1}{2},2)$. In this case the corresponding operator in theorem 8 is given by $\mathfrak{W}_{I,q}=\mathfrak{W}_q$, that is, the multiplication operator attached to the function $\tilde{H}_q(\chi)$, and $C(f*\overline{g}^*)$ is then given by the limit (Theorem 8)
\[\lim_{q\rightarrow 1} \int_{\mathbb{B}} \mathcal{M}_q(f)(1/2,\chi)\overline{\mathcal{M}_q(g)(1/2,\chi)} \tilde{H}_q(\chi) d\chi=-C(f*\overline{g}^*),\]
this is not a limit of variances since the linear functional 
$$C(\mathbb{B})\ni f\mapsto \int_{\mathbb{B}}f (-\tilde{H}_{q}(\chi))d\chi,$$
defines a signed measure and not a probability measure. On the other hand, for $q$ sufficiently close to $1$, the integrals of the above limit are positive, since in this case the positivity has been proved, \cite{connespositive}. In this work we showed that the explicit sums can be understanded within a probability framework. Nevertheless, the strategy of J.F Burnol remains open.

\end{remark}

\section{Outlook.}

The first step in constructing quantum mechanics from classical mechanics is given by the quantization rule that replaces the Poisson bracket of observables, i.e. functions on the phase space, by the commutator of operators on an abstract Hilbert space $\mathcal{H}$. This operators are the corresponding observables of quantum theory \cite{FLORESGONZALEZ2013394}. 
The corresponding algebra of operators (the so called Heisenberg algebra)  is represented on the Hilbert space $\mathcal{H}_{sch}$ of square-integrable functions $L^2(\mathbb{R},dx)$, this is called the Schrödinger's representation.  The celebrated Stone-von Neumann theorem establishes that any irreducible representation of the Heinsenberg algebra with certain commutation relations is unitarily equivalent to the Schrödinger's representation. Nevertheless, motivated by the physical principle of a discrete structure of space, one may relax one or some of the regularity assumpltions of the uniqueness theorem, for example, one can assume that one of the operators of the Heisenberg algebra is not weakly continuous. As a consequence, the Stone-von Neumann theorem does not hold, and therefore we obtain a non-unitarily equivalent representation, this is the so called polymer representation. Moreover, as a consequence of this, the operator $\hat{p}=-i\hbar \frac{d}{dx}$ is not well defined (but can be approximated!), which is expected in a discrete space. It turns out that the Hilbert space $\mathcal{H}_{poly}$ of such representation is given by $\mathcal{H}_{poly}=L^2(\bohrc{},d\chi)$. \newline
Polya and Hilbert proposed the idea that in order to understand the location of the zeros of the Riemann zeta function, one should find a Hilbert space $\mathcal{H}$ and an operator $D$ in $\mathcal{H}$ whose spectrum is given by the nontrivial zeros of the zeta function. This project is still carry on by, for example, the work of Alain Connes or Shai Haran. The Hilbert space $L^2(\mathbb{B})$ is directly related with two theories: the fundamental role as the Hilbert space in the Polymer representation, and as shown, for example,  in Theorem 4, with the theory of distributions of the zeta function initiated by Harald Bohr. Theorem 8, Corollary 2 and Theorem 9, shown how this space also allow us to have a probabilistic interpretation and an expression of the Weil explicit sum as a limit of integrals respect to spectral measures of operators defined on this space. The author believes that more information of the zeros of the zeta function and more possible interesting results will emerge on the study of the connections between PQM and the "off-diagonal" functions presented here.

\bibliographystyle{plain}
\bibliography{biblio}

\begin{thebibliography}{10}

\bibitem{AntanasLaurinčikas2001}
K.~Matsumoto A.~Laurinčikas.
\newblock The universality of zeta-functions attached to certain cusp forms.
\newblock {\em Acta Arithmetica}, 98(4):345--359, 2001.

\bibitem{Bagchi}
B.~Bagchi.
\newblock {\em Statistical behaviour and universality properties of the Riemann
  zeta-function and other allied Dirichlet series}.
\newblock PhD thesis, Indian Statistical Institute, 1981.

\bibitem{Bohrjessenlimit}
H.~Bohr and B.~Jessen.
\newblock {\"U}ber die werteverteilung der riemannschen zetafunktion.
\newblock {\em Acta Mathematica}, 54(1):1--35, Jul 1930.

\bibitem{Burnol}
J.F Burnol.
\newblock On fourier and zeta(s).
\newblock {\em Forum Mathematicum}, 16(6):789--840, 2004.

\bibitem{Burnol2}
J.F Burnol.
\newblock The explicit formula and a propagator.
\newblock {\em arXiv:math/9809119}, September 1998, revised November 1998.

\bibitem{Connes1997}
A.~Connes.
\newblock Trace formula in noncommutative geometry and the zeros of the riemann
  zeta function.
\newblock {\em Journées équations aux dérivées partielles}, pages 1--28,
  1997.

\bibitem{connespositive}
A.~Connes and C.~Consani.
\newblock Weil positivity and trace formula the archimedean place.
\newblock {\em Selecta Mathematica}, 27(4):77, Jul 2021.

\bibitem{FLORESGONZALEZ2013394}
E.~Flores-González, H.~A. Morales-Técotl, and J.~D. Reyes.
\newblock Propagators in polymer quantum mechanics.
\newblock {\em Annals of Physics}, 336:394--412, 2013.

\bibitem{Haran1990}
S.~Haran.
\newblock Riesz potentials and explicit sums in arithmetic.
\newblock {\em Inventiones mathematicae}, 101(3):697--704, 1990.

\bibitem{Mystery}
S.~Haran.
\newblock {\em The mysteries of the real prime}, volume NS 25 of {\em London
  Math. Soc. Mono.}
\newblock Clarendon Press, 2001.

\bibitem{bohrabstract}
E.~Hewitt and K.~A. Ross.
\newblock {\em Abstract Harmonic Analysis I}.
\newblock Springer-Verlag, Berlin, 1987.

\bibitem{bohrgelfad}
G.~Shilov I.~Gelfand, D.~Raikov.
\newblock {\em Commutative normed rings}.
\newblock Chelsea Publishing Company Chelsea, New York, 1964.

\bibitem{kacstatistical}
M.~Kac.
\newblock {\em Statistical independence in probability, analysis and number
  theory}.
\newblock The Mathematical Association of America, 1959.

\bibitem{digamma}
C.~Krattenthaler and H.M. Srivastava.
\newblock Summations for basic hypergeometric series involving a q-analogue of
  the digamma function.
\newblock {\em Computers and Mathematics with Applications}, 32(3):73--91,
  1996.

\bibitem{LimitLaurincikas}
A.~Laurinčikas.
\newblock {\em Limit Theorems for the Riemann Zeta-Function}.
\newblock Mathematics and Its Applications. Springer Dordrecht, 1995.

\bibitem{Loebeprob}
M.~Loève.
\newblock {\em Probability Theory}.
\newblock D. Van N. Company, London, 1963.

\bibitem{Riemannorig}
B.~Riemann.
\newblock Über die anzahl der primzahlen unter einer gegebenen grösse.
\newblock {\em Monatsberichte der Berliner Akademie}, pages 671--680, 1860.

\bibitem{takanobu}
S.~Takanobu.
\newblock {Bohr–Jessen process and functional limit theorem}.
\newblock {\em Kyoto Journal of Mathematics}, 54(2):401 -- 426, 2014.

\bibitem{Tate1950FourierAI}
J.~Tate.
\newblock Fourier analysis in number fields and hecke's zeta-functions.
\newblock page 305–347. Proc. Instructional Conf., Brighton, 1965), Thompson,
  Washington, D.C., 1950.

\bibitem{bohrrelativ}
T.~Thiemann.
\newblock {\em Modern Canonical Quantum General Relativity}.
\newblock Cambridge university press, 2007.

\bibitem{Velhinho_2007}
J.~M. Velhinho.
\newblock The quantum configuration space of loop quantum cosmology.
\newblock {\em Classical and Quantum Gravity}, 24(14):3745, jul 2007.

\bibitem{Weilexplicit}
A.~Weil.
\newblock Sur les “formules explicites” de la théorie des nombres
  premiers.
\newblock {\em Comm. Sém. Math. Univ. Lund}, pages 252--265, 1952.

\bibitem{Yoshida}
H.~Yoshida.
\newblock On hermitian forms attached to zeta functions. zeta functions in
  geometry.
\newblock {\em Adv. Stud. Pure Math., 21}, 1992.

\end{thebibliography}

\end{document}